\numberwithin{equation}{section}
\def\citep#1#2{\cite[{#1}]{#2}}
\newtheorem{theorem}{Theorem}[section]
\newtheorem{lemma}[theorem]{Lemma}
\numberwithin{equation}{section}
\newtheorem{proposition}[theorem]{Proposition}
\theoremstyle{definition}
\theoremstyle{remark}
\newtheorem{remark}[theorem]{Remark}
\theoremstyle{assumption}
\newtheorem{assumption}{Assumption}
\newcommand{\RefSec}[1]{Section~\textup{\ref{#1}}}
\newcommand{\RefThm}[1]{Theorem~\textup{\ref{#1}}}
\newcommand{\RefProp}[1]{Proposition~\textup{\ref{#1}}}
\newcommand{\RefLem}[1]{Lemma~\textup{\ref{#1}}}
\newcommand{\E}{\mathbb{E}}
\newcommand{\bee}{{\boldsymbol{e}}}
\newcommand{\bx}{{\boldsymbol{x}}}
\newcommand{\by}{{\boldsymbol{y}}}
\newcommand{\bz}{{\boldsymbol{z}}}
\newcommand{\brho}{{\boldsymbol{\rho}}}
\newcommand{\bxi}{{\boldsymbol{\xi}}}
\newcommand{\bnu}{{\boldsymbol{\nu}}}
\newcommand{\bvarrho}{{\boldsymbol{\varrho}}}
\newcommand{\bbeta}{{\boldsymbol{\beta}}}
\newcommand{\bgamma}{{\boldsymbol{\gamma}}}
\newcommand{\btau}{{\boldsymbol{\tau}}}
\newcommand{\bzeta}{{\boldsymbol{\zeta}}}
\newcommand{\rd}{{\rm d}} 
\newcommand*{\bigtimes}{\mathop{\raisebox{-.5ex}{\hbox{\huge{$\times$}}}}}
\def\RR{{\mathbb R}}
\def\NN{{\mathbb N}}
\def\CC{{\mathbb C}}
\def\CC{{\mathbb C}}
\def\NN{{\mathbb N}}
\def\RR{{\mathbb R}}
\def\Bb{{\mathcal B}}
\def\Bb{{\mathcal B}}
\def\Ee{{\mathcal E}}
\def\Gg{{\mathcal G}}
\def\Ll{{\mathcal L}}
\def\Ww{{\mathcal W}}
\def\CC{{\mathbb C}}
\def\NN{{\mathbb N}}
\def\RR{{\mathbb R}}
\def\FF{{\mathbb F}}
\def\supp{\operatorname{supp}}
\def\div{\operatorname{div}}
\def\dist{\operatorname{dist}}
\def\mfu{{\mathfrak u }}
\newcommand{\be}{\begin{equation}}
\newcommand{\ee}{\end{equation}}
\newcommand{\beq}{\begin{eqnarray}}
\newcommand{\beqq}{\begin{eqnarray*}}
\newcommand{\eeq}{\end{eqnarray}}
\newcommand{\eeqq}{\end{eqnarray*}}
\definecolor{darkred}{RGB}{139,0,0}
\definecolor{darkgreen}{RGB}{0,100,0}
\definecolor{darkmagenta}{RGB}{180,0,180}
\definecolor{darkblue}{RGB}{0,0,190}
\title{Analyticity of Parametric Elliptic Eigenvalue Problems and Applications to Quasi-Monte Carlo Methods}
\author{Van Kien Nguyen}
\affil{Department of Mathematical Analysis, University of Transport and Communications
	\\ No.3 Cau Giay Street, Lang
	Thuong Ward, Dong Da District, Hanoi, Vietnam
\\
Email: kiennv@utc.edu.vn}
\begin{document}
\maketitle

\begin{abstract} In the present paper, we study the analyticity of the leftmost eigenvalue of the linear elliptic partial differential operator with random coefficient and analyze the convergence rate of the quasi-Monte Carlo method for approximation of the expectation of this quantity. The random coefficient is assumed to be  represented by an affine expansion $a_0(\bx)+\sum_{j\in \NN}y_ja_j(\bx)$, where elements of the parameter vector $\by=(y_j)_{j\in \NN}\in U^\infty$ are independent and identically uniformly distributed on $U:=[-\frac{1}{2},\frac{1}{2}]$. Under the assumption $\|\sum_{j\in \NN}\rho_j|a_j|\|_{L_\infty(D)} <\infty$ with some positive sequence $(\rho_j)_{j\in \NN}\in \ell_p(\NN)$ for $p\in (0,1]$ we show that for any $\by\in U^\infty$, the elliptic partial differential operator has a countably infinite number of eigenvalues $(\lambda_j(\by))_{j\in \NN}$ which can be ordered non-decreasingly. Moreover, the spectral gap $\lambda_2(\by)-\lambda_1(\by)$ is uniformly positive in $U^\infty$. From this, we prove the holomorphic extension property of $\lambda_1(\by)$ to a complex domain in $\CC^\infty$ and estimate mixed derivatives of $\lambda_1(\by)$ with respect to the parameters $\by$ by using Cauchy's formula for analytic functions. Based on these bounds we prove the dimension-independent convergence rate of the quasi-Monte Carlo method to approximate the expectation of $\lambda_1(\by)$. 
%
	
\smallskip
	\noindent \emph{Keywords.}  elliptic partial differential equations, eigenvalue problems, analyticity, quasi-Monte Carlo methods
	
	\smallskip
	\noindent \emph{Mathematics Subject Classification.} {35J15, 35P15, 35A20, 65C05}
\end{abstract}

\section{Introduction}
In the last two decades there has been a tremendous growth of interest in uncertainty quantification for physical, biological, or geological models such as groundwater flow, heat transfer, or risk management in financial mathematics. Normally, these models are described by  partial differential equations 
where the input data may be a random variable or a random field. This induces that the derived  quantity of interest will in general also be a random variable or a random field. 
The computational goal is usually to find the expected value or high-order moments of these derived quantities in which  calculation of high-dimensional (or even infinite) integrals is required. Due to its immunity to the dimension of integration, recently, there is a huge interest in treating uncertainty quantification problems by quasi-Monte Carlo method (QMC) such as  
Dick et al. \cite{Dic13,Dic14,Dic16}, Gantner et al. \cite{Gan18}, Gilbert et al. \cite{Gil18, Gil18b,Gil20,Gil21}, Graham et al. \cite{Gra11,Gra15,Gra18},
Herrmann and Schwab \cite{Her18-2,Her18-3,Her18}, Kazashi \cite{Kaz18}, Kuo and Nuyens \cite{Kuo16,Kuo18}, Kuo et al. \cite{Kuo12,Kuo15,Kuo17}, Lemieux \cite{Lem09},  Leobacher and Pillichshammer \cite{Leo14},  Nguyen and Nuyens \cite{Ngu18,Ngu19}, Nichols and Kuo \cite{Nic14} to mention just a few.

Let $D\subset \RR^d$, $d=1,2,3$, be a bounded Lipschitz domain. In this paper we consider a family of real parametric eigenvalue problems (EVP) of the form
\be \label{EVP}
-\div\big(a(\by)(\bx)\nabla \omega(\by)(\bx)\big)  
=
\lambda (\by)  \omega(\by)(\bx)
,
\,
\qquad \bx \in D
,
\ee 
with the homogeneous Dirichlet boundary condition, i.e., $\omega(\by)(\bx)|_{\partial D}=0$. We assume that the coefficient $a(\by)(\bx)$   has  an expansion of the form
\be \label{eq:rep}
a(\by)(\bx)=a_0(\bx)+\sum_{j\in \NN}y_ja_j(\bx) ,
\ee 
where $a_0$ and $(a_j)_{j\in \NN}$ belong to $L_\infty(D)$ and elements $y_j$ of the parameters $\by=(y_j)_{j\in \NN}\in U^\infty $, $U:=[-\frac{1}{2},\frac{1}{2}]$, are  independent and identically uniformly distributed on $U$. Hence, the distribution of $\by$ is given by the product measure $\rd \by =\bigotimes_{j\in \NN}\rd y_j$ on  $U^\infty$.  

We denote by $\langle \cdot,\cdot \rangle $ the inner product in $L_2(D)$ and by $V:=H_0^1(D)$ the Sobolev space of  real-valued functions with vanishing boundary in the sense of trace. The norm of the function $v\in V$ is defined by
\beqq
\|v\|_V:=\|\nabla u\|_{L_2(D)}\,.
\eeqq
The dual space $H^{-1}(D)$ of $V$ is denoted by $V^*$ and $\langle \cdot,\cdot\rangle_{V\times V^*}$ is the duality pairing on $V$ and $V^*$.
For $\by\in U^\infty$ we define the symmetric bilinear form $\Bb(\by,\cdot,\cdot):V\times  V\to \RR$ by  
\begin{equation}\label{eq-bilinearf-form}
\begin{split} 
	\Bb(\by,u,v)&:=\int_D a(\by)(\bx) \nabla u(\bx)  \cdot \nabla v(\bx)\rd \bx \,.
\end{split}
\end{equation}
The variational formulation of the parametric EVP \eqref{EVP} reads as follows. For any given $\by \in U^\infty$, find  $\big(\lambda(\by),\omega(\by)\big)\in \RR \times V$, with $\omega(\by)\not =0$   such that
\be\label{real}
\begin{split}
	\Bb(\by,\omega(\by),v)  &= \lambda(\by)\langle \omega(\by),v\rangle ,\qquad \text{for all }v\in V
	\\
	\|\omega(\by)\|_{L_2(D)}&=1\,.
\end{split}
\ee
Under some assumptions on the systems $(a_j)_{j \in \NN}$ it will be proven (see  \RefSec{sec:wellposed}) that for any $\by \in U^\infty$  the problem~\eqref{real} has a countably infinite number of eigenvalues denoted by $(\lambda_k(\by))_{k \in \NN}$. Moreover, they can be ordered in a non-decreasing sequence (associated with $\by$). 
In the present paper, we are interested in studying the analyticity of the leftmost eigenvalue $\lambda_1(\by) $ and analyzing the convergence rate of approximating the expectation
\begin{equation}\label{eq:objective}
	\E_{\by}[\lambda_1] := \int_{U^\infty} \lambda_1(\by) \rd \by
\end{equation}
by randomized QMC rules.

The eigenvalue 
problems of parametric or 
stochastic elliptic differential 
operators have been of interest for the past fifty years, see \cite{Wa66,SA72,GG07,Wi10,And12,Wi13,HKL15,ES19,HL19,Gil18,Gil18b,Gil20,Gil21} and references therein. 
These problems appear in many areas of engineering and physics, for example, in nuclear reactor physics; photonics; quantum physics; acoustic; or in electromagnetic.  In applications, the leftmost eigenvalue $\lambda_1$ and its corresponding eigenfunction $\omega_1$ usually have an important physical meaning, see \cite{Wa66,Dud76Nuc,Do99, GG12}. For example, in the model of nuclear reactor the eigenvalue $\lambda_1$ characterizes the physical state of the core reactor (critical, supercritical, or subcritical) while the eigenfunction $\omega_1$ models the associated neutron flux. Therefore, the calculation of the smallest eigenvalue $\lambda_1$ and its eigenfunction $\omega_1$ is one of the primary objectives of nuclear reactor analysis.

The analytic dependence of the eigenvalue $\lambda_1(\by)$ on the parameters $\by$  of the parametric EVP \eqref{real} has been considered in \cite{And12}. The analysis of the QMC method for $\lambda_1(\by)$  was studied in \cite{Gil18,Gil20,Gil21}. However, in these mentioned papers, the authors have not taken into account the amount of overlap between the
supports of the functions $(a_j)_{j\in \NN}$. It has been observed in many situations that, for example  in elliptic PDEs with stochastic diffusion coefficients \cite{BCDM,BCM}, disjoint supports  or finite overlap of the functions  $(a_j)_{j\in \NN}$ (such as wavelet-type representations) generally leads to simpler
analysis and a lower computational cost compared to global supports representations. 
Motivated by this fact, the present paper aims at proving the   holomorphic extensions of $\lambda_1(\by)$ to a complex domain in $\CC^\infty$ and extending the QMC convergence theory
of  \cite{Gil18} accounting
for possible locality of the supports of the functions $(a_j)_{j\in \NN}$ in the representation \eqref{eq:rep}. For the analysis relevant, it  requires  that the spectral gap $\lambda_2(\by)-\lambda_1(\by)$ is bounded away from zero uniformly in $U^\infty$. In this paper we  give a simple proof for the uniform positivity of this spectral gap under a weaker assumption compared to \cite{Gil18,Gil18b}. To do this we point out that the set $\mathcal{K}=\{ a(\by)(\bx): \ \by \in U^\infty\}$ is compact in $L_\infty(D)$  and the mapping 
$\mathcal{K}\ni a \longrightarrow \lambda_2(a) - \lambda_1(a) \in \RR \,
$
is Lipschitz continuous.

The main tool in analyzing the error of approximating the integral \eqref{eq:objective} by QMC formula  is the bound on the mixed partial derivatives $|\partial^\bnu \lambda_1 (\by) |$ with respect to parameters $\by$, where $\bnu=(\nu_j)_{j\in \NN}$ is a multi-index with finitely many non-zero entries. One of such a bound was obtained in \cite{Gil18} where the authors  have proved that
\be \label{eq:alexbound}
|\partial^{\bnu}\lambda_1(\by)| \leq  C (|\bnu|!)^{1+\varepsilon} \big( C_\varepsilon \bbeta \big)^\bnu\,,
\ee 
 for  $\bbeta=(\|a_j\|_{L_\infty(D)})_{j\in \NN}$ and  $\varepsilon$ arbitrarily close to zero. This estimate  highly depends on $\varepsilon$, in particular the constant $C_\varepsilon$ tends to infinity when $\varepsilon$ approaches zero, see \cite[Lemma 3.3.]{Gil18}. The assumptions for the QMC convergence  in \cite{Gil18}
 relied on the $p$-summability of the sequence $\bbeta$, in detail it was assumed that $\sum_{j\in \NN}\|a_j\|_{L_\infty(D)}^p <\infty$ for some $p\in (0,1)$. 
One disadvantage of the estimate \eqref{eq:alexbound} is that it does not allow the authors in \cite{Gil18} to study convergence of the QMC quadrature in the case $p=1$.

In this paper, by using Cauchy's formula for analytic functions, we give a new bound for  mixed partial derivatives of $\lambda_1(\by)$ with respect to $\by$. Under the assumption that $\brho=(\rho_j)_{j\in \NN}$ is a  sequence of positive numbers satisfying
\beqq
 \Bigg\|\sum_{j\in \NN}\rho_j|a_j|\Bigg\|_{L_\infty(D)} <\infty\,
\eeqq
we prove that
\be \label{eq:ourbound}
|\partial^{\bnu}\lambda_1(\by)| 
\leq 
K 
\frac{\bnu! } {(\eta\brho)^\bnu},
\ee
for some positive constants $K$ and $\eta$. Hence our analysis in this paper improves the result in \cite{Gil18} to the case of the sequence of functions $(a_j)_{j \in \NN}$ having disjoint supports or finite overlap. Moreover the estimate \eqref{eq:ourbound} also allows us to consider the QMC method in  the case $p=1$ which was left open in \cite{Gil18}, see Remark \ref{rem:global} and Theorem \ref{thm-main3}.

The outline of this paper is as follows. In section \ref{sec:wellposed} we prove the well-posedness of the EVP \eqref{real} and recapitulate some basic properties of eigenpairs of this problem. In particular, in this section we give a simple proof showing that the spectral gap of the EVP \eqref{real} is uniformly positive in $U^\infty$.  Section \ref{sec:bound} is devoted to prove the analytic dependence on the parameters $\by$ of   $\lambda_1(\by)$ and the corresponding eigenfunction $\omega_1(\by)$  under a weaker assumption where the locality in the supports of the system $(a_j)_{j\in \NN}$ is considered. This analytic property is then employed to bound  the derivatives of the eigenvalue $\lambda_1(\by)$ and eigenfunction $\omega_1(\by)$ with respect to $\by\in U^\infty$. In section \ref{sec:error} we apply the result in Section \ref{sec:bound} to  study the convergence of the QMC method for the expectations of $\lambda_1$ and $\mathcal{G}(\omega_1)$, where $\mathcal{G}$ is an element in $V^*$. 

\noindent
{\bf Notation.} We use standard notations.  We denote $\NN_0^\infty$ the set of all
sequences $\bnu = (\nu_j)_{j\in \NN}$ with $\nu_j\in \NN_0$. Similarly, we define
$\CC^\infty$ and $U^\infty$.  Denote by $\FF$ the set of all $\bnu\in \NN_0^\infty$ such that $\supp(\bnu):=\{ j\in \NN: \nu_j \ne 0\}$ is finite. If $\bnu\in \FF$, we define
\beqq
\bnu! := \prod_{j \in \NN}\nu_j!\,,
\qquad 
|\bnu|:=\sum_{j \in \NN}\nu_j,
\qquad
\text{and}
\qquad 
\brho^\bnu := \prod_{j \in \NN}\rho_j^{\nu_j}
\eeqq
for a sequence $\brho=(\rho_j)_{j\in \NN}$ of positive numbers.  
\section{Well-posedness of the parametric eigenvalue problems}\label{sec:wellposed}
The main purpose of this section is to give a condition on which the EVP \eqref{real} is well-posed and to show the uniform positivity of the spectral gap of this problem.

Let $\chi_1$, $\chi_2, \ldots$ be eigenvalues of the negative Laplacian on $D$ with the homogeneous boundary condition. 
They are strictly positive and the eigenvalue $\chi_1$ is isolated and non-degenerate.
By the min-max principle for the variational characterization of eigenvalues of self-adjoint operators, see, e.g.,~\cite{Bab88}, we have
\begin{align}\label{chik}
\chi_k 
=  
\min_{S_k\subset V\atop \dim(S_k)=k}
\max_{0\not = u\in S_k}
\frac
{\|u\|_V}
{\|u\|_{L_2(D)}}
.
\end{align}
When $k=1$ we have the Poincar\'e inequality 
\begin{equation*}
\|v\|_{L_2(D)} \leq \chi_1^{-1/2}\|v\|_V,
\qquad 
\text{for } v\in V
.
\end{equation*}

Throughout this paper we use the following assumption.
\begin{assumption}\label{assumption} 
 \begin{enumerate}
 	\item The functions $a_0$, $(a_j)_{j\in \NN}$ belong to $\in L_\infty(D)$ and 
 	\be \label{cond:lowera0b0c} 
 	0<\alpha_{\min} \leq a_0(\bx) \leq \alpha_{\max} <\infty\,.
 	\ee
 	\item  There exists a positive sequence $\brho=(\rho_j)_{j\in \NN}$ such that $\lim_{j\to \infty}\rho_j^{-1} =0$ and  
 $ \sum_{j\in \NN}\rho_j|a_j| $ belongs to $ L_\infty(D)$. We put
 	\be \label{cond:lambda0} 
 	\Lambda_1
 	:=
  \Bigg\|\sum_{j\in \NN}\rho_j|a_j|\Bigg\|_{L_\infty(D)} 
  \qquad 
  \text{and}
  \qquad
 		\Lambda_0
 		:=
 		  \Bigg\|\sum_{j\in \NN}\frac{1}{2}|a_j|\Bigg\|_{L_\infty(D)} 
\ee
and assume that $\Lambda_0<\alpha_{\min}$.
 \end{enumerate}
\end{assumption}

We have the following result. A proof can be found in, e.g.,  \cite[Section 2]{BCM}. 
 
\begin{lemma}\label{coe-uni} 
	Let   Assumption \ref{assumption} hold. Then the bilinear form $\Bb(\by,\cdot,\cdot)$ defined in \eqref{eq-bilinearf-form} is coercive and
bounded, uniformly in $\by$, i.e,
\beqq
\begin{split}
	\Bb(\by,v,v) 
	&\geq 
	\big(\alpha_{\min} -\Lambda_0 \big) \|v\|_V^2,
	\qquad \text{for all}\ v\in V 
\end{split}
\eeqq
and
\beqq
\begin{split}
	\Bb(\by,u,v) & \leq \big(\alpha_{\max}+\Lambda_0 \big) \|u\|_V \|v\|_V,\qquad \text{for all } u,v\in V\,.
\end{split}
\eeqq
\end{lemma}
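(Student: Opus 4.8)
\proof
\noindent\emph{(Proposal.)}
The plan is to reduce both assertions to uniform (in $\by$) pointwise bounds on the coefficient $a(\by)(\cdot)$ and then integrate against $|\nabla u|\,|\nabla v|$. First I would check that for every $\by\in U^\infty$ the affine series \eqref{eq:rep} converges in $L_\infty(D)$: since $|y_j|\le\tfrac12$ for all $j$, we have $\sum_{j\in\NN}|y_j a_j|\le\tfrac12\sum_{j\in\NN}|a_j|$ almost everywhere on $D$, and the right-hand side has finite $L_\infty(D)$-norm $\Lambda_0$, which satisfies $\Lambda_0<\alpha_{\min}$ by the second part of Assumption \ref{assumption}. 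Hence the partial sums $a_0+\sum_{j=1}^{N}y_j a_j$ form a Cauchy sequence in $L_\infty(D)$, the limit $a(\by)$ lies in $L_\infty(D)$, and the series may be handled termwise for almost every $\bx\in D$.

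Next I would establish the two-sided bound. Using \eqref{cond:lowera0b0c}, $|y_j|\le\tfrac12$, and the definition of $\Lambda_0$ in \eqref{cond:lambda0}, for almost every $\bx\in D$ one gets
\beqq
a(\by)(\bx)
&\ge&
\alpha_{\min}-\sum_{j\in\NN}|y_j|\,|a_j(\bx)|
\ \ge\
\alpha_{\min}-\frac12\sum_{j\in\NN}|a_j(\bx)|
\ \ge\
\alpha_{\min}-\Lambda_0\ >\ 0 ,
\eeqq
and, symmetrically, $a(\by)(\bx)\le\alpha_{\max}+\Lambda_0$; crucially, neither bound depends on $\by$.

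Finally, coercivity follows by taking $u=v$ in \eqref{eq-bilinearf-form} and inserting the lower bound, $\Bb(\by,v,v)=\int_D a(\by)(\bx)\,|\nabla v(\bx)|^2\,\rd\bx\ge(\alpha_{\min}-\Lambda_0)\,\|\nabla v\|_{L_2(D)}^2=(\alpha_{\min}-\Lambda_0)\|v\|_V^2$, while boundedness follows from the upper bound together with the Cauchy--Schwarz inequality in $L_2(D)$, $|\Bb(\by,u,v)|\le(\alpha_{\max}+\Lambda_0)\int_D|\nabla u(\bx)|\,|\nabla v(\bx)|\,\rd\bx\le(\alpha_{\max}+\Lambda_0)\|u\|_V\|v\|_V$. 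The whole argument is elementary; the only step that deserves genuine care is the $L_\infty(D)$-convergence of the affine expansion in the first paragraph, since it is what legitimizes both the almost-everywhere pointwise estimates and the manipulation of the infinite sum under the integral — and this is exactly where the hypothesis $\Lambda_0<\infty$ (equivalently, the summability of $(|a_j|)_{j\in\NN}$ guaranteed by $\rho_j^{-1}\to0$ and $\sum_{j\in\NN}\rho_j|a_j|\in L_\infty(D)$) enters.
\endproof
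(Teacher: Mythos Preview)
Your argument is correct and is exactly the standard route: derive the uniform two-sided pointwise bound $\alpha_{\min}-\Lambda_0\le a(\by)(\bx)\le \alpha_{\max}+\Lambda_0$ from Assumption~\ref{assumption}, then plug it into the integral defining $\Bb(\by,\cdot,\cdot)$ and apply Cauchy--Schwarz for the upper bound. The paper does not actually write out a proof of this lemma but simply refers to \cite[Section~2]{BCM}, where precisely this elementary argument is carried out; so your proposal matches the intended proof.
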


Let $\by \in U^\infty$. For any function $f\in L_2(D)$, we consider the operator 
\beqq
T(\by):\ \ L_2(D)\ni f \longrightarrow T(\by)f \in V \subset L_2(D)
\eeqq
defined by 
\begin{equation}\label{eq-T(by)}
\Bb(\by,T(\by)f,v) = \langle f,v\rangle ,\qquad \text{for all }v\in V\,.
\end{equation}
Under Assumption \ref{assumption}, it has been proved that the operator $T(\by)$ is self-adjoint, compact, and positive from $L_2(D)$ to $L_2(D)$,  see \cite[Section 1.2.2]{Hen06}. Then, there exist a real positive sequence $\mu_k(\by)$ converging to zero and a sequence of functions $\omega_k(\by)$ with $\|\omega_k(\by)\|_{L_2(D)}=1$ such that $T(\by)\omega_k(\by)=\mu_k(\by)\omega_k(\by)$. Putting $\lambda_k(\by)=\frac{1}{\mu_k(\by)}$ we  obtain 
\be \label{eq:eigen-k}
	\Bb(\by,\omega_k(\by),v)  = \lambda_k(\by)\langle \omega_k(\by),v\rangle ,\qquad \text{for all }v\in V.
\ee
The pair $\big(\lambda_k(\by),\omega_k(\by)\big)\in \RR \times V$ is called the eigenpair of the bilinear form $\Bb(\by,\cdot,\cdot)$. We have the following estimates.
\begin{lemma}\label{lem:boundLambda}
	Under  Assumption \ref{assumption}, for any $k \in \NN$ and any $\by \in U^\infty$  we have
	\begin{align*}
(\alpha_{\min}-\Lambda_0 )\chi_k
	\ \leq\
	\lambda_k(\by) \
	\leq\
	 (\alpha_{\max} +\Lambda_0 )\chi_k 
	\end{align*}
	and
	\beqq
	\|\omega_k(\by)\|_V
\	\leq \
	\bigg(
		\frac{(\alpha_{\max} +\Lambda_0 )\chi_k }{ \alpha_{\min} -\Lambda_0  }
	\bigg)^{1/2}
	.
	\eeqq
\end{lemma}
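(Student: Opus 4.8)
The statement to be proved is Lemma~\ref{lem:boundLambda}, which gives two-sided bounds on $\lambda_k(\by)$ and an upper bound on $\|\omega_k(\by)\|_V$. The plan is to combine the Courant--Fischer min-max characterization of the eigenvalues $\lambda_k(\by)$ of the bilinear form $\Bb(\by,\cdot,\cdot)$ with the uniform coercivity and boundedness estimates from Lemma~\ref{coe-uni}, and then compare with the Laplacian eigenvalues $\chi_k$ through their own min-max formula~\eqref{chik}.

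\textbf{Step 1: Min-max for $\lambda_k(\by)$.} Since $T(\by)$ is self-adjoint, compact and positive on $L_2(D)$, its eigenvalues $\mu_k(\by)$ (and hence $\lambda_k(\by)=1/\mu_k(\by)$) admit a min-max characterization. Translating this to the bilinear form: for each $\by\in U^\infty$,
\[
\lambda_k(\by)
=
\min_{\substack{S_k\subset V\\ \dim S_k = k}}
\ \max_{\substack{u\in S_k\\ u\ne 0}}
\frac{\Bb(\by,u,u)}{\|u\|_{L_2(D)}^2}\,.
\]
This is the standard Rayleigh-quotient form; I would state it and cite the same min-max reference already used for~\eqref{chik}, or derive it quickly from~\eqref{eq:eigen-k} and the spectral theorem for $T(\by)$.

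\textbf{Step 2: Sandwiching the Rayleigh quotient.} By Lemma~\ref{coe-uni}, for every $u\in V$,
\[
(\alpha_{\min}-\Lambda_0)\,\|u\|_V^2
\ \le\
\Bb(\by,u,u)
\ \le\
(\alpha_{\max}+\Lambda_0)\,\|u\|_V^2\,.
\]
Dividing by $\|u\|_{L_2(D)}^2$ and inserting into the min-max formula of Step~1, the constants $(\alpha_{\min}-\Lambda_0)$ and $(\alpha_{\max}+\Lambda_0)$ pass outside the $\min$ and $\max$ (they are positive, using $\Lambda_0<\alpha_{\min}$), so
\[
(\alpha_{\min}-\Lambda_0)
\min_{\dim S_k=k}\max_{0\ne u\in S_k}\frac{\|u\|_V^2}{\|u\|_{L_2(D)}^2}
\ \le\
\lambda_k(\by)
\ \le\
(\alpha_{\max}+\Lambda_0)
\min_{\dim S_k=k}\max_{0\ne u\in S_k}\frac{\|u\|_V^2}{\|u\|_{L_2(D)}^2}\,.
\]
By~\eqref{chik}, the min-max expression equals $\chi_k$, giving the claimed two-sided bound on $\lambda_k(\by)$.

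\textbf{Step 3: The eigenfunction bound.} Take $v=\omega_k(\by)$ in~\eqref{eq:eigen-k}. Then $\Bb(\by,\omega_k(\by),\omega_k(\by))=\lambda_k(\by)\|\omega_k(\by)\|_{L_2(D)}^2=\lambda_k(\by)$ since $\|\omega_k(\by)\|_{L_2(D)}=1$. Coercivity from Lemma~\ref{coe-uni} gives $(\alpha_{\min}-\Lambda_0)\|\omega_k(\by)\|_V^2\le\Bb(\by,\omega_k(\by),\omega_k(\by))=\lambda_k(\by)$, and then the upper bound $\lambda_k(\by)\le(\alpha_{\max}+\Lambda_0)\chi_k$ from Step~2 yields
\[
\|\omega_k(\by)\|_V^2
\ \le\
\frac{\lambda_k(\by)}{\alpha_{\min}-\Lambda_0}
\ \le\
\frac{(\alpha_{\max}+\Lambda_0)\chi_k}{\alpha_{\min}-\Lambda_0}\,,
\]
which is the stated inequality after taking square roots.

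\textbf{Main obstacle.} There is no serious analytic difficulty here; the only point requiring a little care is Step~1 — making the min-max characterization of $\lambda_k(\by)$ for the \emph{generalized} eigenvalue problem~\eqref{eq:eigen-k} precise, since the Rayleigh quotient uses $\|u\|_{L_2(D)}^2$ in the denominator rather than $\|u\|_V^2$. This follows from the spectral decomposition of the compact self-adjoint operator $T(\by)$ on $L_2(D)$ together with the identity $\Bb(\by,T(\by)f,v)=\langle f,v\rangle$, but it should be stated cleanly. Everything after that is a direct substitution using Lemma~\ref{coe-uni} and~\eqref{chik}.
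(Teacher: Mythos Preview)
Your proposal is correct and follows essentially the same route as the paper: the paper also invokes the min-max characterization $\lambda_k(\by)=\min_{\dim S_k=k}\max_{0\ne u\in S_k}\Bb(\by,u,u)/\langle u,u\rangle$, sandwiches the Rayleigh quotient using the coercivity and boundedness bounds of Lemma~\ref{coe-uni}, identifies the resulting min-max with $\chi_k$, and then obtains the eigenfunction bound by testing~\eqref{eq:eigen-k} with $v=\omega_k(\by)$ and applying coercivity together with the upper bound on $\lambda_k(\by)$. Your remark that the min-max formula for the generalized problem deserves a clean justification is fair; the paper simply asserts it with a reference.
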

\begin{proof}
Using the min-max principle, Lemma \ref{coe-uni}, and~\eqref{chik} we obtain 
\be \label{up-bound}
\begin{split} 
\lambda_k(\by)
&
 =
 \min_{S_k\subset V\atop \dim(S_k)=k}
 \max_{0\not = u\in S_k}
 \frac{\Bb(\by,u,u)}{\langle u,u \rangle}
 \\
 & \leq  \min_{S_k\subset V\atop \dim(S_k)=k}\max_{0\not = u\in S_k}\frac{(\alpha_{\max} +\Lambda_0 )\langle \nabla u\cdot\nabla u \rangle }{ \langle u,u \rangle} 
 =  (\alpha_{\max} +\Lambda_0 )\chi_k 
 .
\end{split}
\ee 
Similarly, we have
\begin{align*} 
	\lambda_k(\by)
	&
	=\min_{S_k\subset V\atop \dim(S_k)=k}\max_{0\not = u\in S_k}\frac{\Bb(\by,u,u)}{\langle u,u \rangle}
	\\
	& \geq  \min_{S_k\subset V\atop \dim(S_k)=k}\max_{0\not = u\in S_k}\frac{(\alpha_{\min}-\Lambda_0 )\langle \nabla u,\nabla u\rangle  }{ \langle u,u \rangle}
 = (\alpha_{\min}-\Lambda_0 )\chi_k
 .
\end{align*}
Furthermore, taking $v = \omega_k(\by)$ as a test function in \eqref{eq:eigen-k}, we obtain $\Bb\big(\by,\omega_k(\by),\omega_k(\by)\big)=\lambda_k(\by)$. This and Lemma \ref{coe-uni} lead to
\beqq
\|\omega_k(\by)\|_V^2
\leq 
\frac{\lambda_k(\by)}{ \alpha_{\min} -\Lambda_0  }
 \leq 
\frac{(\alpha_{\max} +\Lambda_0 )\chi_k }{ \alpha_{\min} -\Lambda_0  }
\eeqq
which is the needed claim.
\hfill
\end{proof}

To prove the uniformly positiveness of spectral  gap $\lambda_2(\by)-\lambda_1(\by)$ in $U^\infty$ we need an auxiliary lemma. 
\begin{lemma}\label{lem:compact}
Let $\psi_j \in L_\infty(D)$ for all $j \in \NN_0$. If there exists a positive sequence $\bvarrho=(\varrho_j)_{j\in \NN}$ such that 
	$
	\sum_{j\in \NN} \varrho_j|\psi_j|  
	\in L_\infty(D)$ and $\lim_{j\to \infty}\varrho_j^{-1}=0$, then
	the set of functions
\beqq
\psi(U^\infty)
:=
\bigg\{
 \psi(\by):= \psi_0 +\sum_{j\in \NN}y_j\psi_j :\ 
  \by=(y_j)_{j\in \NN}\in U^\infty\bigg\}
\eeqq is compact in $L_\infty(D)$.
\end{lemma}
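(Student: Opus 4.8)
The plan is to exhibit $\psi(U^\infty)$ as the continuous image of a compact set. Equip $U^\infty=\prod_{j\in\NN}U$ with the product topology, under which it is compact by Tychonoff's theorem (indeed it is a compact metrizable space), and consider the map
\beqq
\Psi\colon U^\infty\longrightarrow L_\infty(D),\qquad \Psi(\by):=\psi_0+\sum_{j\in\NN}y_j\psi_j .
\eeqq
Once we know that this series converges in $L_\infty(D)$ for every $\by\in U^\infty$ and that $\Psi$ is continuous (for the norm topology on $L_\infty(D)$), the set $\psi(U^\infty)=\Psi(U^\infty)$ is compact, being the continuous image of a compact set, and we are done.

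The only point that requires genuine use of the hypotheses is the following quantitative tail estimate. Since $\varrho_j>0$ for all $j$ and $\varrho_j^{-1}\to 0$, the quantity $\sup_{j>N}\varrho_j^{-1}$ is finite and tends to $0$ as $N\to\infty$; because all summands below are nonnegative, we get the pointwise a.e. bound $\sum_{j>N}|\psi_j|=\sum_{j>N}\varrho_j^{-1}\,\varrho_j|\psi_j|\le\big(\sup_{j>N}\varrho_j^{-1}\big)\sum_{j\in\NN}\varrho_j|\psi_j|$, hence
\beqq
\Bigg\|\sum_{j>N}|\psi_j|\Bigg\|_{L_\infty(D)}\ \le\ \Big(\sup_{j>N}\varrho_j^{-1}\Big)\,\Bigg\|\sum_{j\in\NN}\varrho_j|\psi_j|\Bigg\|_{L_\infty(D)}\ \longrightarrow\ 0 \quad\text{as }N\to\infty .
\eeqq
Taking $N=0$ shows $\sum_{j\in\NN}|\psi_j|\in L_\infty(D)$, so the defining series of $\Psi(\by)$ converges absolutely in $L_\infty(D)$ for every $\by$. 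Writing $S_N(\by):=\psi_0+\sum_{j=1}^N y_j\psi_j$ and using $|y_j|\le\frac12$, we obtain $\|\Psi(\by)-S_N(\by)\|_{L_\infty(D)}\le\frac12\big\|\sum_{j>N}|\psi_j|\big\|_{L_\infty(D)}$, so $S_N\to\Psi$ \emph{uniformly} on $U^\infty$. Each $S_N$ is continuous from $U^\infty$ to $L_\infty(D)$: it factors through the continuous coordinate projection $U^\infty\to U^N$ followed by the affine (indeed Lipschitz) map $(y_1,\dots,y_N)\mapsto\psi_0+\sum_{j=1}^N y_j\psi_j$. A uniform limit of continuous maps into a metric space is continuous, whence $\Psi$ is continuous, completing the argument.

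The main (mild) obstacle is precisely the uniform smallness of the tails $\big\|\sum_{j>N}|\psi_j|\big\|_{L_\infty(D)}$: this is where the condition $\varrho_j^{-1}\to 0$ is essential and cannot be replaced by mere summability assumptions on $(\|\psi_j\|_{L_\infty(D)})$ or even by $\sum_{j\in\NN}|\psi_j|\in L_\infty(D)$ alone — without it the image of the cube under the (unconditionally but not uniformly convergent) series need not be precompact. Alternatively, one can avoid the topological language and argue directly that $\psi(U^\infty)$ is totally bounded: for $\varepsilon>0$ pick $N$ with $\frac12\big\|\sum_{j>N}|\psi_j|\big\|_{L_\infty(D)}<\varepsilon/2$, note that $\{S_N(\by):\by\in U^\infty\}$ lies in a bounded subset of the finite-dimensional space $\mathrm{span}\{\psi_0,\dots,\psi_N\}$ and hence is totally bounded, cover it by finitely many $L_\infty(D)$-balls of radius $\varepsilon/2$, and observe that the concentric balls of radius $\varepsilon$ cover $\psi(U^\infty)$; together with completeness of $L_\infty(D)$ and closedness of $\psi(U^\infty)$ (which again follows from uniform convergence), this yields compactness.
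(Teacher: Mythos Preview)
Your proof is correct. The core tail estimate you use,
\[
\Bigg\|\sum_{j>N}|\psi_j|\Bigg\|_{L_\infty(D)}\ \le\ \Big(\sup_{j>N}\varrho_j^{-1}\Big)\,\Bigg\|\sum_{j\in\NN}\varrho_j|\psi_j|\Bigg\|_{L_\infty(D)}\ \longrightarrow\ 0,
\]
is exactly the estimate the paper proves. The difference is in how you package it. The paper simply notes that this bound means every element of $\psi(U^\infty)$ lies within a uniformly small distance of the bounded set $\{\psi_0+\sum_{j=1}^{n-1}y_j\psi_j:\by\in U^\infty\}$ in the finite-dimensional span of $\psi_0,\dots,\psi_{n-1}$, and concludes compactness from that (mentioning the subsequence-extraction argument of \cite{CoDe} as an alternative). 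Your main route instead views the same estimate as uniform convergence $S_N\to\Psi$, deduces continuity of $\Psi:U^\infty\to L_\infty(D)$, and invokes Tychonoff to get compactness of the image. This is a genuinely different (and cleaner) framing: it automatically delivers closedness of $\psi(U^\infty)$, which the paper's finite-dimensional-approximation argument leaves implicit (their argument literally yields only total boundedness). Your ``alternative'' paragraph is essentially the paper's own argument; note, however, that the parenthetical claim there that closedness ``follows from uniform convergence'' is a bit glib---to make that alternative self-contained you would still need a diagonal/subsequence argument on the parameters, which is precisely what your Tychonoff route supplies for free.
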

\begin{proof} One can follow the argument in \cite[Lemma 2.7]{CoDe} by showing that for every sequence in $\psi(U^\infty)$ we can extract a subsequence whose limit belongs to $\psi(U^\infty)$. In the following we show that $\psi(U^\infty)$ can be approximated by a subspace of finite dimension in $L_\infty(D)$. Indeed for $n\in \NN$ and $\psi(\by)\in \psi(U^\infty)$ we approximate $\psi(\by)$ by $\psi_0+\sum_{j=1}^{n-1}y_j\psi_j$. Since	$
	\sum_{j\in \NN} \varrho_j|\psi_j|  
	$ 
	is bounded in $D$  we have
\begin{align*}
	\Bigg\|\psi(\by)-\psi_0-\sum_{j=1}^{n-1}y_j\psi_j\Bigg\|_{L_\infty(D)}
	& =\Bigg\|  \sum_{j=n}^{\infty}y_j\psi_j\Bigg\|_{L_\infty(D)}
	\\
	&\leq 	\frac{1}{2}\sup_{j\geq n}\varrho_j^{-1}  \Bigg\| \sum_{j=n}^{\infty}\varrho_j|\psi_j|\Bigg\|_{L_\infty(D)}\leq C \sup_{j\geq n}\varrho_j^{-1}.
\end{align*}
Due to 	$\lim_{j\to \infty}\varrho_j^{-1}=0$ we have $\lim_{n\to \infty}\big(\sup_{j\geq n}\varrho_j^{-1}\big)=0$ which implies that the set $\psi(U^\infty)$ is compact in $L_\infty(D)$.
\hfill
\end{proof}

 We define  
 $$\mathcal{K}
:=
\big\{
a(\by) \in L_\infty(D) ,\ \by\in U^\infty
\big\}.$$
In the following proposition we will show that  the map
\begin{align*}
\lambda_k:\ \mathcal{K}\ni a \longrightarrow \lambda_k(a) \in \RR \,
\end{align*}
 is Lipschitz continuous with respect to $a$. As a consequence we conclude  that the spectral gap of the EVP \eqref{real} is  uniformly positive in $U^\infty$.
\begin{proposition}\label{prop:properties} 
		Under Assumption \ref{assumption}, for any $\by\in U^\infty$ the EVP \eqref{real} has the following properties:
	\begin{enumerate}
		\item There are countably-many eigenvalues $(\lambda_k(\by))_{k\in \NN}$ which are all positive, have finite multiplicity and accumulate at infinity. Counting multiplicities we can write
		\beqq
		0<\lambda_1(\by) < \lambda_2(\by) \leq \ldots
		.		
		\eeqq
		Additionally, $\lambda_1(\by)$ is isolated and non-degenerate.
		\item There exist four positive constants $\gamma_{\min}$, $\gamma_{\max}$, $\delta_{\min}$ and $\delta_{\max}$ independent of $\by$ such that
		\be \label{gamma}
		\begin{split}
  0< \gamma_{\min}\leq \gamma(\by):= \lambda_2(\by)-\lambda_1(\by) \leq \gamma_{\max} < \infty
  ,
\end{split}
\ee
and 
\be \label{delta}
\begin{split} 
0<
\delta_{\min}\leq \delta(\by):=\frac{\lambda_2(\by)-\lambda_1(\by)}{\lambda_1(\by)}\leq \delta_{\max}
< \infty
.
		\end{split}
\ee 
		\item For any $k \in \NN$ the eigenvalue $\lambda_k(\by)$ is Lipschitz	continuous in $\by$ (with $ \ell_\infty(\NN)$-norm).
	\end{enumerate}  
\end{proposition}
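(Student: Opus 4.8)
The plan is to prove the three items in the order (1), (3), (2), since the uniform spectral gap in (2) will be obtained by combining the Lipschitz continuity of (3) with the compactness of $\mathcal K$ coming from Lemma \ref{lem:compact}.

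For (1) I would simply read off the spectral data already assembled before the statement: since $T(\by)$ is self-adjoint, compact and positive on $L_2(D)$, the spectral theorem for compact self-adjoint operators gives a sequence $\mu_k(\by)>0$ with $\mu_k(\by)\to 0$, each of finite multiplicity, so the $\lambda_k(\by)=1/\mu_k(\by)$ are positive, of finite multiplicity, and accumulate only at $+\infty$. The only point needing a separate argument is that $\lambda_1(\by)$ is isolated and simple, i.e.\ $\lambda_1(\by)<\lambda_2(\by)$. This is the classical simplicity of the principal eigenvalue of the uniformly elliptic divergence-form operator $-\div(a(\by)\nabla\,\cdot\,)$ with Dirichlet data (note $a(\by)\ge\alpha_{\min}-\Lambda_0>0$): $|\omega_1(\by)|$ is again a minimizer of the Rayleigh quotient, hence an eigenfunction for $\lambda_1(\by)$, and by the strong maximum principle it is strictly positive in $D$; an eigenfunction associated with any larger eigenvalue is $L_2(D)$-orthogonal to a one-signed function and must therefore change sign, which forces $\lambda_1(\by)$ to be simple and, together with the discreteness above, isolated.

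For (3) I would use the min-max formula \eqref{up-bound}. For $a,a'\in\mathcal K$ and $0\ne u\in V$ write $R_a(u):=\int_D a|\nabla u|^2\,\rd\bx/\|u\|_{L_2(D)}^2$; then $|R_a(u)-R_{a'}(u)|\le\|a-a'\|_{L_\infty(D)}\,\|u\|_V^2/\|u\|_{L_2(D)}^2$, and since $a'\ge\alpha_{\min}-\Lambda_0$ a.e.\ one has $\|u\|_V^2/\|u\|_{L_2(D)}^2\le R_{a'}(u)/(\alpha_{\min}-\Lambda_0)$, so that
\[
R_a(u)\ \le\ R_{a'}(u)\Bigl(1+\tfrac{\|a-a'\|_{L_\infty(D)}}{\alpha_{\min}-\Lambda_0}\Bigr).
\]
Taking the maximum over a $k$-dimensional subspace and then the minimum over such subspaces, and using symmetry together with the upper bound $\lambda_k\le(\alpha_{\max}+\Lambda_0)\chi_k$ from Lemma \ref{lem:boundLambda}, gives
\[
|\lambda_k(a)-\lambda_k(a')|\ \le\ \frac{(\alpha_{\max}+\Lambda_0)\chi_k}{\alpha_{\min}-\Lambda_0}\,\|a-a'\|_{L_\infty(D)}.
\]
Hence each $\lambda_k$, and therefore $\gamma=\lambda_2-\lambda_1$, is Lipschitz on $\mathcal K$; composing with $\by\mapsto a(\by)$, for which $\|a(\by)-a(\by')\|_{L_\infty(D)}\le 2\Lambda_0\,\|\by-\by'\|_{\ell_\infty(\NN)}$ by \eqref{cond:lambda0}, yields the Lipschitz continuity of $\lambda_k$ in the $\ell_\infty(\NN)$-norm asserted in (3).

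For (2) the upper bounds are immediate from Lemma \ref{lem:boundLambda}: $\gamma(\by)\le\lambda_2(\by)\le(\alpha_{\max}+\Lambda_0)\chi_2=:\gamma_{\max}$, and then $\delta(\by)=\gamma(\by)/\lambda_1(\by)\le\gamma_{\max}/((\alpha_{\min}-\Lambda_0)\chi_1)=:\delta_{\max}$. The crux — and the main obstacle — is the uniform lower bound $\gamma_{\min}>0$, which cannot be read off by minimizing over $\by\in U^\infty$ directly, since that set is not compact for the $\ell_\infty(\NN)$-metric in which $\gamma$ is continuous (a positive continuous function could then still have zero infimum). Instead I would invoke Lemma \ref{lem:compact} with $\psi_j=a_j$, $\varrho_j=\rho_j$ (whose hypotheses are exactly Assumption \ref{assumption}(2), in particular $\rho_j^{-1}\to 0$) to conclude that $\mathcal K$ is compact in $L_\infty(D)$. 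On this compact set $\gamma$ is continuous by (3) and strictly positive at each point by (1), so $\gamma_{\min}:=\min_{a\in\mathcal K}\gamma(a)>0$; since $a(\by)\in\mathcal K$ for every $\by$, this gives $\gamma(\by)\ge\gamma_{\min}$ uniformly, and finally $\delta(\by)=\gamma(\by)/\lambda_1(\by)\ge\gamma_{\min}/((\alpha_{\max}+\Lambda_0)\chi_1)=:\delta_{\min}>0$ by Lemma \ref{lem:boundLambda}. The whole argument hinges on this compactness step: Assumption \ref{assumption}(2) is precisely what upgrades the pointwise strict gap $\lambda_1(\by)<\lambda_2(\by)$ to a uniform one through compactness and continuity rather than through an explicit perturbation estimate.
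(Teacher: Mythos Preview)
Your proof is correct and follows the same overall architecture as the paper: reduce the uniform gap in (2) to continuity of $a\mapsto\lambda_2(a)-\lambda_1(a)$ on $\mathcal K$ together with the compactness of $\mathcal K$ from Lemma~\ref{lem:compact}, and derive (3) by composing the Lipschitz dependence of $\lambda_k$ on $a$ with the elementary bound $\|a(\by)-a(\by')\|_{L_\infty(D)}\le 2\Lambda_0\|\by-\by'\|_{\ell_\infty(\NN)}$.

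The one substantive difference is how you obtain the Lipschitz estimate $|\lambda_k(a)-\lambda_k(a')|\le C_k\|a-a'\|_{L_\infty(D)}$. You argue directly from the min--max characterisation, bounding the Rayleigh quotients by $R_a(u)\le R_{a'}(u)\bigl(1+\|a-a'\|_{L_\infty(D)}/(\alpha_{\min}-\Lambda_0)\bigr)$ and then passing to the min--max. The paper instead invokes a perturbation result for compact self-adjoint operators (\cite[Theorem~2.3.1]{Hen06}), namely $|\mu_k(\by)-\mu_k(\tilde\by)|\le\|T(\by)-T(\tilde\by)\|_{L_2\to L_2}$, and then bounds the solution-operator difference $\|T(\by)-T(\tilde\by)\|$ via a Lax--Milgram argument in terms of $\|a(\by)-a(\tilde\by)\|_{L_\infty(D)}$. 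Your route is more elementary and self-contained (no external perturbation theorem, no detour through the solution operator), and it yields the clean explicit constant $(\alpha_{\max}+\Lambda_0)\chi_k/(\alpha_{\min}-\Lambda_0)$; the paper's route has the advantage of packaging the estimate in operator-theoretic form, which can be reused if one later needs norm-resolvent--type statements. For part~(1) you spell out the simplicity of $\lambda_1$ via the strong maximum principle, whereas the paper simply cites the Krein--Rutman argument in \cite{And12,Gil18}; these are the same classical fact.
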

\begin{proof} 
	Because the bilinear form $\Bb(\by,\cdot,\cdot)$ is coercive and
	bounded the first claim has been proved in~\cite[Section 2]{And12}, see also~\cite[Section 2.1]{Gil18} (by using the Krein--Rutman theorem). 
	
Since $T(\by)$ are self-ajoint, compact, and positive  operators, from \cite[Theorem 2.3.1]{Hen06}, for  $\by, \tilde{\by}\in U^\infty$ we have
\beqq
|\mu_k(\by)-\mu_k(\tilde{\by})| \leq \|T(\by)-T(\tilde{\by})\|_{L_2(D)\to L_2(D)}
\eeqq	
which is equivalent to
\be \label{1-2}
|\lambda_k(\by) - \lambda_k(\tilde{\by})| \leq \lambda_k(\by)\lambda_k(\tilde{\by})  \|T(\by)-T(\tilde{\by})\|_{L_2(D)\to L_2(D)}\,.
\ee 
In the following we will show that 
\begin{equation}\label{eq-T(y)-T(y)}
\|T(\by)-T(\tilde{\by})\|_{L_2(D)\to L_2(D)} \leq \frac{1}{\sqrt{\chi_1}} \frac{\alpha_{\max}+\Lambda_0}{(\alpha_{\min}-\Lambda_0)^2} \|a(\by)-a(\tilde{\by})\|_{L_\infty(D)} 
\end{equation}
by
 using the same argument as in the proof of  \cite[Proposition 2.3]{Gil18}. From \eqref{eq-T(by)} we have
\begin{align*}
	\Bb(\by,T(\by)f,v) = \Bb(\tilde{\by},T(\tilde{\by})f,v) 
\end{align*}
for $f\in L_2(D)$ and $v\in V$. This implies
\begin{align*}
	\Bb(\by,T(\by)f-T(\tilde{\by})f,v) & = 	\Bb(\tilde{\by}, T(\tilde{\by})f,v)-	\Bb(\by,T(\tilde{\by})f,v)
	\\
	&= \int_D \big[a(\by)(\bx)-a(\tilde{\by})(\bx)\big] \nabla T(\tilde{\by})f(\bx) \cdot  \nabla v(\bx)\rd \bx.
\end{align*}
With $v=T(\by)f-T(\tilde{\by})f$, by Lemma \ref{coe-uni} and the Cauchy–Schwarz inequality
we get
$$
(\alpha_{\min} -\Lambda_0)  \|T(\by)f-T(\tilde{\by})f\|_V^2 \leq \|a(\by)-a(\tilde{\by})\|_{L_\infty(D)} \|T(\tilde{\by})f\|_V \|T(\by)f-T(\tilde{\by})f\|_V.
$$
Using Lax–Milgram Theorem and Poincar\'e inequalities  we find
$$
\|T(\tilde{\by})f\|_V\leq \frac{\alpha_{\max}+\Lambda_0}{\alpha_{\min}-\Lambda_0}\|f\|_{V^*} \leq \frac{1}{\sqrt{\chi_1}}\frac{\alpha_{\max}+\Lambda_0}{\alpha_{\min}-\Lambda_0}\|f\|_{L_2(D)}.
$$
Hence
$$
 \|T(\by)f-T(\tilde{\by})f\|_V \leq \frac{1}{\alpha_{\min} -\Lambda_0 } \|a(\by)-a(\tilde{\by})\|_{L_\infty(D)}\cdot  \frac{1}{\sqrt{\chi_1}}\frac{\alpha_{\max}+\Lambda_0}{\alpha_{\min}-\Lambda_0}\|f\|_{L_2(D)}
$$
which implies \eqref{eq-T(y)-T(y)}. 
Now by Lemma \ref{lem:boundLambda} we infer the existence of a constant $C_k>0$ such that
\begin{equation}  \label{eq-continuos}
|\lambda_k(\by) - \lambda_k(\tilde{\by})| \leq C_k \|a(\by)-a(\tilde{\by})\|_{L_\infty(D)}\,.
\end{equation}
Consequently, we obtain
\beqq
\begin{split} 
\big|\big[\lambda_2(\by)-\lambda_1(\by)\big] & - \big[\lambda_2(\tilde{\by})-\lambda_1(\tilde{\by})\big] \big|
\leq 
(C_1+C_2) \|a(\by)-a(\tilde{\by})\|_{L_\infty(D)}\,
\end{split}
\eeqq
or with $a=a(\by)$ and $\tilde{a}=a(\tilde{\by})$ we can write 
\beqq
\big|\big[\lambda_2(a)-\lambda_1(a)\big]  - \big[\lambda_2(\tilde{a})-\lambda_1(\tilde{a})\big] \big|
\leq 
(C_1+C_2) \|a-\tilde{a}\|_{L_\infty(D)}\,.
\eeqq
This implies that the map $a \mapsto \lambda_2(a) -\lambda_1(a)$ is continuous in the set $\mathcal{K}\subset L_\infty(D)$. We know from~\RefLem{lem:compact} that  $\mathcal{K}$ is compact in $L_\infty(D)$. Moreover, since $0<\lambda_2(\by)-\lambda_1(\by)<\infty$ for all $\by\in U^\infty$, we conclude  that there exit two constants $\gamma_{\min}$ and $\gamma_{\max}$ such that 
$$0
<
\gamma_{\min}\leq \gamma(\by)=
 \lambda_2(\by)-\lambda_1(\by) 
 \leq 
 \gamma_{\max}<\infty
 .
 $$
 The estimate \eqref{delta}  then follows from~\RefLem{lem:boundLambda}. This is the second claim. 

Finally, we have
\beqq
\begin{split}
\|a(\by)-a(\tilde{\by})\|_{L_\infty(D)} 
&
= \Bigg\| \sum_{j\in \NN}(y_j-\tilde{y}_j) a_j\Bigg\|_{L_\infty(D)}
 \leq 2\|\by-\tilde{\by}\|_{\ell_\infty(\NN)}\Bigg\|\sum_{j\in \NN}\frac{1}{2}|a_j| \Bigg\|_{L_\infty(D)}
 \\
 &
 = 2\Lambda_0  \|\by-\tilde{\by}\|_{\ell_\infty(\NN)}\,.
\end{split}
\eeqq
Inserting this into \eqref{eq-continuos} we obtain the last claim. The proof is finished.
\hfill  
\end{proof}
\section{Parametric analyticity and bound of mixed derivatives}\label{sec:bound}
The analytic dependence of the eigenpair $(\lambda_1,\omega_1)$ on the parameters $\by$  of the parametric EVP \eqref{real} has been studied in \cite{And12} under the assumption that $\sum_{j\in \NN}\|a_j\|_{L_\infty(D)}^p <\infty$, $p\in (0,1]$.
In this section, we will extend this result to a weaker assumption where the locality in the supports of the system $(a_j)_{j\in \NN}$ is considered. Afterward, we use Cauchy's formula to estimate  the derivatives of the eigenvalues $\lambda_1(\by)$ and eigenfunctions $\omega_1(\by)$ with respect to $\by\in U^\infty$. We assume in this section that  $L_2(D)$ and $V$ are complex-valued function spaces. We consider the coefficients of the form
\beqq
a(\bz)(\bx)=a_0(\bx)+\sum_{j\in \NN}z_ja(\bx),
\eeqq
where $\bz=(z_j)_{j\in \NN}\in \CC^\infty$. We define the associated sesquilinear forms $\Bb(\bz,\cdot,\cdot)$ and $ \Bb_j(\cdot,\cdot)$ from $ V\times V$ to  $\CC$ by
$$ 
\Bb(\bz,u,v)
:= 
\int_D a(\bz)(\bx) \nabla u(\bx)  \cdot \overline{ \nabla v(\bx)}\,\rd \bx 
$$
and
$$
	\Bb_j(u,v)
	:=
	\int_D a_j(\bx) \nabla u(\bx)  \cdot \overline{ \nabla v(\bx)}\rd \bx\,.
$$

Let $\Ll(V,V^*)$ denote the set of all continuous linear mappings from $V$ to $V^*$.
We define  $A(\bz)$ and $A_j \in \Ll(V,V^*)$ for $j \in \NN_0$ the operators corresponding to  $\Bb(\bz,\cdot,\cdot)$ and $\Bb_j(\cdot,\cdot)$ by  identifications
$$
	\Bb(\bz,u,v)=\langle u,A(\bz)v\rangle_{V\times V^*},
	\qquad 
	\Bb_j(u,v) =\langle u,A_jv\rangle_{V\times V^*} ,\qquad\text{for }u,v\in V\,.
$$   
The complex version of the EVP~\eqref{real} reads as follows. Find $\big(\lambda(\bz),\omega(\bz)\big)\in \CC \times V$, with $\omega(\bz)\not =0$ such that
\begin{equation} \label{complex}
\begin{aligned}
	\Bb(\bz,\omega(\bz),v) &= \lambda(\bz) \langle \omega(\bz),v\rangle ,\qquad \text{for all }v\in V
	\\
	\|\omega(\bz)\|_{L_2(D)}&=1\,.
\end{aligned}
\end{equation}
This problem is well-posed as long as there exist positive constants $C$ and $\gamma$ (might depend on $\bz$) such that
\begin{align*}
 &|\Bb(\bz,u,v)|  \leq C \|u\|_V \|v\|_V,\qquad \text{for all }u,v\in  V
\\
&\inf_{0\not = u\in V}\sup_{0\not = v\in V}\frac{|\Bb(\bz,u,v)|}{\|u\|_V \|v\|_V} \geq \gamma,
\\
\text{and } &\sup_{u\in V}|\Bb(\bz,u,v)| >0,\qquad\text{for all } 0 \not = v\in V\,,
\end{align*}
see~\cite[Section 2]{And12}. 

Before formulating our main  results in this section we recall the notion of separate holomorphy of countable product spaces over $\CC$. Let $(Z_j)_{j\in \NN}$ be a family of Banach spaces over $\CC$ and $Y$ also a Banach space over $\CC$. Let $S\subset \bigtimes_{j\in \NN}Z_j$ be an open set and $\bz=(z_j)_{j\in \NN}\in S$. For a finite set $J\subset \NN$ we denote
$$
S_J(\bz):= \big\{ (y_j)_{j\in J}: \ \exists (v_j)_{j\in \NN}\in S \ \text{with}\ v_j=y_j, \ j\in J\  \text{and}\ v_j=z_j, \ j\not \in J \big\}\,.
$$
We say that the map $u: S\to Y$ is separately holomorphic if for every finite set $J\subset \NN$ and $\bz\in S$ the map 
$u$ is holomorphic as a function of variables in $S_J(\bz)$.
We have the following result. 
\begin{theorem}\label{thm:analytic} 
	Let Assumption \ref{assumption} hold and  $\gamma_{\max}$, $\delta_{\min}$ be given in \RefProp{prop:properties}. For $\varepsilon\in (0,1)$ we put $\kappa :=\frac{1-\varepsilon}{2(1+\delta_{\min}^{-1})}<1$ and define the sequence $\btau=(\tau_j)_{j\in \NN}$ where
\begin{equation} \label{eta}
\tau_j:=\eta_\varepsilon \rho_j\quad
\text{with }\quad
 \eta_\varepsilon:=(1-\varepsilon)\frac{\alpha_{\min} -\Lambda_0  }{2\Lambda_1 (1+\delta_{\min}^{-1})} 
\end{equation}
and
\begin{equation*}
\mathcal{E}(\btau):=\bigtimes_{j\in \NN} \mathcal{E}_j(\btau)\quad
\text{with }\quad
 \mathcal{E}_j(\btau):=\bigg\{
z_j\in \CC: \dist\Big(z_j,\Big[-\frac{1}{2},\frac{1}{2}\Big]\Big)<\tau_j\bigg\}.
\end{equation*}

Then the eigenpair $(\lambda_1, \omega_1)$ of the EVP \eqref{real} can be extended to  separate holomorphic functions on $\Ee(\btau)$. Moreover, we have 
	\be \label{up-lambda}
	\sup_{\bz\in \Ee(\btau)}|\lambda_1(\bz)| \leq \frac{\gamma_{\max}}{2} +   \big(\alpha_{\max} +\Lambda_0 )\chi_1  =: K_{\lambda}
	,
	\ee 
and
\be \label{up-omega}
	\sup_{\bz\in \Ee(\btau)}\|\omega_1(\bz)\|_V 
	\leq 
	\bigg(\frac{  \gamma_{\max}  + 2(\alpha_{\max} +\Lambda_0)\chi_1}{2(1-\kappa)(\alpha_{\min} -\Lambda_0) }\bigg)^{1/2}
	=:K_{\omega}
	.
\ee 
\end{theorem}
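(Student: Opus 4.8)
The plan is to establish the holomorphic extension by a perturbation argument around a fixed base point $\bz_0=(z_j)_{j\in\NN}\in\Ee(\btau)$, which amounts to showing that for $\bz$ in a polydisc around $\bz_0$ the spectral gap of $\lambda_1$ persists and the eigenprojection depends analytically on $\bz$. First I would quantify the perturbation: writing $a(\bz)(\bx)=a(\by)(\bx)+\sum_j(z_j-y_j)a_j(\bx)$ for the real part $\by=\Re\bz_0$ restricted to $U^\infty$, and using the definition of $\Ee(\btau)$, one estimates the operator norm of the difference of the sesquilinear forms by $\|\sum_j|z_j-y_j|\,|a_j|\|_{L_\infty(D)}\le\sum_j\tau_j\|\rho_j^{-1}\rho_j a_j\|\cdots$; more precisely, since $\dist(z_j,[-\tfrac12,\tfrac12])<\tau_j=\eta_\varepsilon\rho_j$, one gets $\|A(\bz)-A(\by)\|_{\Ll(V,V^*)}\le\sum_j\eta_\varepsilon\rho_j\|a_j\|\le\eta_\varepsilon\Lambda_1=(1-\varepsilon)\frac{\alpha_{\min}-\Lambda_0}{2(1+\delta_{\min}^{-1})}$. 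The crucial arithmetic is that this perturbation size is strictly smaller than roughly half the spectral gap divided by $\lambda_1$, so that the first eigenvalue stays isolated.

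Next I would invoke standard analytic perturbation theory of eigenvalues (Kato), or equivalently the Dunford–Riesz contour-integral representation of the spectral projection $P_1(\bz)=\frac{1}{2\pi i}\oint_{\Gamma}(\lambda I-T(\bz))^{-1}\,d\lambda$ with a fixed contour $\Gamma$ encircling $\mu_1(\by)=1/\lambda_1(\by)$ and no other eigenvalue of $T(\by)$; the bounds from Lemma~\ref{lem:boundLambda} and the uniform spectral gap \eqref{delta} from Proposition~\ref{prop:properties} guarantee such a contour exists at distance $\gtrsim\gamma_{\min}$ from the rest of the spectrum uniformly in $\by\in U^\infty$. The resolvent $(\lambda I-T(\bz))^{-1}$ is holomorphic in each $z_j$ (the map $\bz\mapsto A(\bz)$ is affine, hence entire, and inversion is holomorphic on the set where the inf-sup condition holds), so $P_1(\bz)$, then $\lambda_1(\bz)=\operatorname{tr}(T(\bz)P_1(\bz))/\operatorname{tr}(P_1(\bz))$ and a suitably normalized $\omega_1(\bz)$, are separately holomorphic on the polydisc; letting the base point range over $\Ee(\btau)$ gives separate holomorphy on all of $\Ee(\btau)$. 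One must check that the well-posedness conditions listed after \eqref{complex} (boundedness, inf-sup, and the transpose condition) hold throughout $\Ee(\btau)$ — this follows from Lemma~\ref{coe-uni} together with the perturbation bound above, since the perturbation does not destroy coercivity modulo the $\kappa$-factor.

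For the bound \eqref{up-lambda}: on the real slice $\lambda_1(\by)\le(\alpha_{\max}+\Lambda_0)\chi_1$ by Lemma~\ref{lem:boundLambda}, and the Lipschitz/perturbation estimate shows $|\lambda_1(\bz)-\lambda_1(\by)|$ is controlled by the gap; combining, $|\lambda_1(\bz)|\le(\alpha_{\max}+\Lambda_0)\chi_1+\gamma_{\max}/2=K_\lambda$, where the $\gamma_{\max}/2$ accounts for how far the perturbed eigenvalue can drift while remaining inside the contour of radius $\sim\gamma_{\min}/2$ (one has to be a little careful that the drift is measured against $\gamma_{\min}$ but bounded by $\gamma_{\max}$). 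For \eqref{up-omega}: testing the eigenvalue equation \eqref{complex} with $v=\omega_1(\bz)$ gives $\Bb(\bz,\omega_1(\bz),\omega_1(\bz))=\lambda_1(\bz)$, and the perturbed coercivity bound $\Re\Bb(\bz,v,v)\ge(1-\kappa)(\alpha_{\min}-\Lambda_0)\|v\|_V^2$ (the $(1-\kappa)$ loss coming exactly from $\|A(\bz)-A(\by)\|\le\kappa(\alpha_{\min}-\Lambda_0)$ after plugging in $\eta_\varepsilon$, since $\kappa=\frac{1-\varepsilon}{2(1+\delta_{\min}^{-1})}$ and $\eta_\varepsilon\Lambda_1=\kappa(\alpha_{\min}-\Lambda_0)$) together with $|\lambda_1(\bz)|\le K_\lambda$ yields $\|\omega_1(\bz)\|_V^2\le K_\lambda/((1-\kappa)(\alpha_{\min}-\Lambda_0))$, which rearranges to $K_\omega$.

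The main obstacle I anticipate is not the functional-analytic machinery — that is classical — but bookkeeping the constants so that the perturbation radius $\eta_\varepsilon\Lambda_1$ lands precisely at $\kappa(\alpha_{\min}-\Lambda_0)$ and that this is provably less than the half-gap needed to keep $\lambda_1$ isolated, uniformly over the base point in $\Ee(\btau)$. In particular one needs to relate the gap in terms of the operator $T(\bz)$ (gaps between $\mu_k$) to the gap in terms of $\lambda_k$, and to verify that the factor $1+\delta_{\min}^{-1}$ appearing in $\eta_\varepsilon$ is exactly what makes the $T(\bz)$-resolvent contour stay valid; this is where the definition $\delta(\by)=(\lambda_2-\lambda_1)/\lambda_1$ is used rather than the raw gap $\gamma(\by)$.
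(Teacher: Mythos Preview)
Your proposal is correct and matches the paper's proof closely: write $\bz=\by+\bzeta$ with $\by\in U^\infty$ and $|\zeta_j|<\tau_j$, bound the perturbation by $\|B(\bzeta)\|_{\Ll(V,V^*)}\le\eta_\varepsilon\Lambda_1=\kappa(\alpha_{\min}-\Lambda_0)$, invoke Kato-type analytic perturbation theory (the paper outsources this to \cite[Theorem~2.6 and Corollary~2.8]{And12} for the one-parameter family $t\mapsto A(\by)+tB(\bzeta)$ on the disc $|t|<\tfrac{1}{1-\varepsilon}$, whereas you sketch the Riesz-projection argument directly), and then obtain \eqref{up-lambda} and \eqref{up-omega} exactly as you describe. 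One small correction: in your chain $\|A(\bz)-A(\by)\|\le\sum_j\eta_\varepsilon\rho_j\|a_j\|\le\eta_\varepsilon\Lambda_1$ the second inequality is backwards in general (indeed $\Lambda_1=\big\|\sum_j\rho_j|a_j|\big\|_{L_\infty(D)}\le\sum_j\rho_j\|a_j\|_{L_\infty(D)}$, and exploiting the possible strictness here is the whole point of Assumption~\ref{assumption}); the correct route, which you also indicate, is the pointwise estimate $\big|\sum_j\zeta_j a_j(\bx)\big|\le\sum_j\tau_j|a_j(\bx)|$ followed directly by $\big\|\sum_j\tau_j|a_j|\big\|_{L_\infty(D)}=\eta_\varepsilon\Lambda_1$.
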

\begin{proof}We follow the proof of \cite[Theorem 2.13]{And12}. For $\bz=(z_j)_{j\in \NN}\in \Ee(\btau) $ we take $\by=(y_j)_{j\in \NN}\in U^\infty$ such that $|z_j-y_j|<\tau_j$ and denote  $\bzeta:=\bz-\by=(\zeta_j)_{j\in \NN}$. Hence, $A(\bz)$ can be written as
$
A(\bz) = A(\by) +  B(\bzeta),
$
where $B(\zeta):=\sum_{j\in \NN}\zeta_jA_j$. 
Define the complex-analytic  operator-valued function 
$$
t \to A(\by) +  t B(\bzeta)
.
$$ 

From~\RefLem{coe-uni} we know that
\beqq
\begin{split}
\langle v,A(\by)v\rangle_{V\times V^*}&\geq \big(\alpha_{\min} -\Lambda_0 \big) \|v\|_V^2,\qquad \text{for all}\ v\in V 
\end{split}
.
\eeqq
We now estimate the norm $\|B(\bzeta)\|_{\Ll(V,V^*)}$. Since $|\zeta_j|< \tau_j$ for all $j\in \NN$, under Assumption \ref{assumption} we obtain
\begin{equation*}
\begin{split}
\langle u,B(\bzeta)v\rangle_{V\times V^*}
&
\leq 
 \int_D
  \Bigg| \sum_{j\in \NN}\tau_j a_j(\bx)\Bigg| \big| \nabla u(\bx)  \cdot \overline{ \nabla v(\bx)}\big|\,\rd \bx  
\leq 
\eta_\varepsilon \Lambda_1   \int_D |\nabla u(\bx)   \cdot \overline{ \nabla v(\bx)}|\,\rd \bx
\\
&
\leq \frac{(1-\varepsilon)\big(\alpha_{\min} -\Lambda_0 \big) }{2 (1+\delta_{\min}^{-1})} \|u\|_V \|v\|_V
 =\kappa \big(\alpha_{\min} -\Lambda_0 \big)  \|u\|_V \|v\|_V\,.
\end{split}
\end{equation*}
This implies $$\|B(\bzeta)\|_{\Ll(V,V^*)}\leq \kappa \big(\alpha_{\min} -\Lambda_0 \big) .$$
Thus, according to~\cite[Theorem 2.6 and Corollary 2.8]{And12} we can analytically extend $\by \to \lambda_1(\by)$ to a complex-valued function $t \to \tilde{\lambda}(\by+t\bzeta)$  in the disk
\beqq
\bigg\{t\in \CC, \ |t|< \frac{1}{1-\varepsilon} \bigg\} = \bigg\{t\in \CC, \ |t|< \frac{1}{2\kappa(1+\delta_{\min}^{-1})} \bigg\} 
.
\eeqq
Moreover, we have $\tilde{\lambda}(\by+t\bzeta)$ is an isolated and non-degenerated eigenvalue of $A(\by) +  t B(\bzeta)$.

 Thus $\tilde{\lambda}(\by+\bzeta)$  is a candidate for the holomorphic
 extension $\lambda_1(\bz)$ of the parametric eigenvalue.
It has been shown in  \cite[Theorem 2.13]{And12} that $ \tilde{\lambda}(\by+\bzeta)$ is independent of the choice $\by\in U^\infty$ and $\bzeta$ satisfying $\bz=\by+\bzeta$. 
Therefore, $\lambda_1(\bz):=\tilde{\lambda}(\by+\bzeta)$ is well-defined. Using the same argument  at the end of the proof of \cite[Theorem 2.13]{And12} we obtain the holomorphic extension of $\lambda_1$  on $\Ee(\btau)$.

 Similar considerations apply for eigenfunction $\omega_1$. Checking the proofs of \cite[Theorem 2.6]{And12} and \cite[Theorems XII.8 and XII.11]{Ree78} we find that $\lambda_1(\bz)$ satisfies
\beqq
|\lambda_1(\bz)-\lambda_1(\by)|\leq \frac{1}{2}\gamma(\by)\leq \frac{\gamma_{\max}}{2},
\eeqq
where $\gamma(\by)$ is given in \eqref{gamma}. Consequently, we obtain from \eqref{up-bound}
\beqq
|\lambda_1(\bz)| \leq \frac{\gamma_{\max}}{2} + \lambda_1(\by) \leq  \frac{\gamma_{\max}}{2} + (  \alpha_{\max} +\Lambda_0 )\chi_1 \,.
\eeqq
To show uniformly boundedness of  $\|\omega_1(\bz)\|_V$ in $\Ee(\btau)$ we use $v=\omega_1(\bz)$ as a test function in \eqref{complex} to get
$$
|\Bb(\bz,\omega_1(\bz),\omega_1(\bz))|=|\lambda_1(\bz)|\,.
$$
This together with 
$$
\Bb\big(\bz,\omega(\bz),\omega(\bz)\big)\geq (\alpha_{\min} -\Lambda_0 )(1-\kappa)\|\omega_1(\bz)\|_V^2
$$  
leads to
\beqq
\|\omega_1(\bz)\|_V^2 \leq \frac{|\lambda_1(\bz)|}{(1-\kappa)(\alpha_{\min} -\Lambda_0 ) } \,.
\eeqq
The proof is completed.
\hfill
\end{proof}

The analyticity of the eigenpair $(\lambda_1(\by),\omega_1(\by))$ leads to the following.
\begin{theorem}\label{Thm:derivatives} 
		Let Assumption \ref{assumption} hold. Then for any $\by\in U^\infty$ and any $\bnu\in \FF$ the partial mixed derivative of eigenvalue $\lambda_1(\by)$ and eigenfunction $\omega_1(\by)$ of the EVP \eqref{real} can be estimated  á
\begin{equation*}
	|\partial^{\bnu}\lambda_1(\by)| 
	\leq 
	K_{\lambda}
	\frac{\bnu! } {(\eta\brho)^\bnu}
\,	,
\qquad \text{and}\qquad
\|\partial^{\bnu}\omega_1(\by)\|_V \leq K_\omega
\frac{\bnu! } {(\eta \brho)^\bnu}
,
\end{equation*}
where $ \eta:=\frac{\alpha_{\min} -\Lambda_0  }{2\Lambda_1 (1+\delta_{\min}^{-1})} $ and $K_{\lambda}$ and $K_\omega$ are given in  \eqref{up-lambda} and \eqref{up-omega}.
\end{theorem}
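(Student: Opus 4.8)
The plan is to obtain the bounds from the holomorphy established in \RefThm{thm:analytic} via the Cauchy integral formula in each variable, exactly as is standard for parametric PDE problems. First I would fix $\bnu\in\FF$ and let $J:=\supp(\bnu)$, a finite set. By \RefThm{thm:analytic}, for $\varepsilon\in(0,1)$ the map $\lambda_1$ extends separately holomorphically to the polydisc-type domain $\Ee(\btau)$ with $\tau_j=\eta_\varepsilon\rho_j$, and is uniformly bounded there by $K_\lambda$ (note $K_\lambda$ as defined in \eqref{up-lambda} does not depend on $\varepsilon$). Restricting to the finitely many active variables $(z_j)_{j\in J}$ and freezing $z_j=y_j$ for $j\notin J$, the function is holomorphic on $\bigtimes_{j\in J}\mathcal{E}_j(\btau)$ and in particular on the closed polydisc $\bigtimes_{j\in J}\overline{D(y_j,r_j)}$ for any radii $r_j<\tau_j$, since $D(y_j,r_j)\subset\mathcal{E}_j(\btau)$ whenever $|y_j|\le\frac12$ and $r_j<\tau_j$. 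Applying the iterated Cauchy formula over the distinguished boundary gives
\begin{equation*}
\partial^\bnu\lambda_1(\by)=\frac{\bnu!}{(2\pi i)^{|J|}}\int_{|z_j-y_j|=r_j,\,j\in J}\frac{\lambda_1(\bz)}{\prod_{j\in J}(z_j-y_j)^{\nu_j+1}}\,\prod_{j\in J}dz_j,
\end{equation*}
whence $|\partial^\bnu\lambda_1(\by)|\le \bnu!\,K_\lambda\prod_{j\in J}r_j^{-\nu_j}$.

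Next I would optimize over the radii: letting $r_j\uparrow\tau_j=\eta_\varepsilon\rho_j$ yields $|\partial^\bnu\lambda_1(\by)|\le K_\lambda\,\bnu!\,(\eta_\varepsilon\brho)^{-\bnu}$. Finally I would let $\varepsilon\downarrow0$; since $\eta_\varepsilon=(1-\varepsilon)\eta$ with $\eta=\frac{\alpha_{\min}-\Lambda_0}{2\Lambda_1(1+\delta_{\min}^{-1})}$, and the left-hand side and $K_\lambda$ are independent of $\varepsilon$, we get $|\partial^\bnu\lambda_1(\by)|\le K_\lambda\,\bnu!\,(\eta\brho)^{-\bnu}$, as claimed. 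The identical argument applied to the $V$-valued holomorphic extension $\omega_1$, using the vector-valued Cauchy formula (which is justified by applying scalar Cauchy to $\langle\omega_1(\bz),v\rangle_V$ for arbitrary $v\in V$, or directly via Bochner integration) together with the uniform bound $\sup_{\bz\in\Ee(\btau)}\|\omega_1(\bz)\|_V\le K_\omega$ from \eqref{up-omega}, gives $\|\partial^\bnu\omega_1(\by)\|_V\le K_\omega\,\bnu!\,(\eta\brho)^{-\bnu}$.

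One technical point I would be careful about: the domain $\Ee(\btau)$ is a product of the $\tau_j$-neighbourhoods of the segment $[-\frac12,\frac12]$, not of disks centered at $y_j$, so I must check that for $y_j\in[-\frac12,\frac12]$ the open disk $D(y_j,\tau_j)$ is contained in $\mathcal{E}_j(\btau)=\{z_j:\dist(z_j,[-\frac12,\frac12])<\tau_j\}$; this is immediate since $\dist(z_j,[-\frac12,\frac12])\le|z_j-y_j|$. I would also note that separate holomorphy on each finite-dimensional slice is exactly what the Cauchy formula in $|J|$ variables requires — one does not need joint holomorphy in infinitely many variables, only holomorphy in the finitely many coordinates indexed by $J$ with the others frozen — so \RefThm{thm:analytic} supplies precisely the right hypothesis. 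The only mildly delicate step is ensuring the uniform bounds $K_\lambda$, $K_\omega$ may be used with radii approaching $\tau_j$; since those bounds hold on all of $\Ee(\btau)$ (an open set containing the closed polydiscs of radius $r_j<\tau_j$), passing $r_j\uparrow\tau_j$ is harmless, and this is really the only place any limiting argument is invoked. I expect no genuine obstacle here: the substance of the theorem is entirely contained in \RefThm{thm:analytic}, and this statement is its routine Cauchy-estimate corollary.
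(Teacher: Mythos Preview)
Your proposal is correct and follows essentially the same route as the paper: invoke the holomorphic extension and uniform bounds of \RefThm{thm:analytic}, apply the multivariate Cauchy integral formula over the finitely many active coordinates, and then pass to the limit $\varepsilon\downarrow 0$ to replace $\eta_\varepsilon$ by $\eta$. The paper collapses your two limits into one by extending analytically to the slightly larger domain $\Ee(\eta_{\varepsilon/2}\brho)$ and integrating directly on circles of radius $\eta_\varepsilon\rho_j$ (which then lie strictly inside), but this is a cosmetic difference. One small point you might make explicit: unlike $K_\lambda$, the constant $K_\omega$ in \eqref{up-omega} depends on $\varepsilon$ through $\kappa=\frac{1-\varepsilon}{2(1+\delta_{\min}^{-1})}$; however $K_\omega$ stays bounded as $\varepsilon\downarrow 0$ since $\kappa\to \frac{1}{2(1+\delta_{\min}^{-1})}<1$, so the limiting argument for $\omega_1$ goes through with $K_\omega$ interpreted at that limiting value of $\kappa$ (the paper is equally informal on this point).
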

\begin{proof} 
Let $\varepsilon\in (0,1)$ and $\eta_\varepsilon$ bee given in \eqref{eta}.	 From \RefThm{thm:analytic} we know that eigenpair $(\lambda_1, \omega_1)$ of the EVP \eqref{real} can be extended to   separately complex-analytic functions on $\Ee(\tilde{\btau})$ with $\tilde{\btau}=\eta_{\varepsilon/2}\brho$. Hence, for any $\by\in U^\infty$ and  $\bnu \in \FF$ with $\mfu:=\supp(\bnu)$ applying Cauchy's formula gives 
	\begin{equation*}
	\partial^{\bnu}\lambda_1(\by) 
	=
	\frac{\bnu!}{(2\pi i)^{|\mfu|}}
	\int_{\mathcal{C}_\mfu(\by,\btau)} 
	\frac{\lambda_1(\bz_\mfu)}
	{\prod_{j\in \mfu}  (z_j-y_j)^{\nu_j+1}}
\prod_{j\in \mfu}	\rd z_j  
	,
\end{equation*}
	where 
\begin{equation*}
	\mathcal{C}_\mfu(\by,\btau)
	:= 
	\bigtimes_{j\in \mfu} 
	\mathcal{C}_{j}(\by,\btau)
	\qquad
	\text{with }\quad
	\mathcal{C}_j (\by,\btau):= \big\{ z_j \in \CC: |z_j-y_j|=\tau_j\big\}
	,
\end{equation*}
	and $$\bz_\mfu\in 	\mathcal{C}^*_\mfu(\by,\btau):= \big\{(z_j)_{j\in \NN}\in \CC^\infty: z_j\in \mathcal{C}_{j}(\by,\btau)\ \text{if}\  j\in \mfu\ \text{and}\ z_j=y_j\ \text{if}\ j\not \in \mfu\big\}.$$
Using \eqref{up-lambda} we obtain
\begin{equation*}
	\begin{aligned} 
|\partial^{\bnu}\lambda_1(\by)| 
& 
\leq 
\frac{\bnu!}{(2\pi )^{|\mfu|}}
\sup_{\bz_\mfu \in \mathcal{C}^*_\mfu(\by,\btau)}|\lambda_1(\bz_\mfu)|
\int_{\mathcal{C}_\mfu(\by,\btau)} 
 \frac{ \prod_{j\in \mfu}	\rd z_j  }{\prod_{j\in \mfu}  |z_j-y_j|^{\nu_j+1}}
 \\
& \leq
 K_{\lambda}
\frac{\bnu! } {\btau^\bnu}  = K_{\lambda}
\frac{\bnu! } {(\eta_\varepsilon \brho)^\bnu}  \,.
	\end{aligned}
\end{equation*}
 Similar considerations give estimate for $\|\partial^{\bnu}\omega_1(\by)\|_V$. Since these bounds hold for any $\varepsilon\in (0,1)$ we obtain the desired results. The proof is completed.
	\hfill \end{proof}
\begin{remark}\label{rem:global}
We give a comment when the system $(a_j)_{j\in \NN}$ has arbitrary supports. Let $\beta_j=\|a_j\|_{L_\infty(D)}$ and $\bbeta=(\beta_j)_{j\in \NN}$. Assume that the second condition in $\eqref{cond:lambda0}$ is replaced by
$
		\Lambda_0
:=
\sum_{j\in \NN}\frac{1}{2}\beta_j < \alpha_{\min}\,.
$ This assumption guarantees that the set
 $\mathcal{K}
:=
\big\{
a(\by) \in L_\infty(D) ,\ \by\in U^\infty
\big\}$ is compact in $L_\infty(D)$, see \cite[Lemma 2.7]{CoDe}. As a consequence, Proposition \ref{prop:properties} holds. 
Now for each $\bnu\in \FF$ fixed we define the sequence $\brho_\bnu=(\rho_j)_{j\in \NN}$ with $\rho_j=\frac{\nu_j}{|\bnu| \beta_j}$ if $j\in \supp(\bnu)$ and $\rho_j=0$ otherwise. From this we have 
$ 	\Lambda_1
:=
\big\|\sum_{j\in \NN}\rho_j|a_j|\big\|_{L_\infty(D)} =1$.  Next, following argument  in the proof of Theorems \ref{thm:analytic} and \ref{Thm:derivatives} we can show that 
	\begin{equation*} 
|\partial^{\bnu}\lambda_1(\by)| 
\leq 
K_{\lambda}
\frac{\bnu! } {(\eta\brho_\bnu)^\bnu} = K_{\lambda}
\frac{\bnu! |\bnu|^{|\bnu|} } {\bnu^\bnu} \Big(\frac{\bbeta}{\eta} \Big)^\bnu,
\end{equation*}
with $ \eta:=\frac{\alpha_{\min} -\Lambda_0  }{2  (1+\delta_{\min}^{-1})} $. Employing the estimate $\frac{|\bnu|^{|\bnu|}}{\bnu^\bnu} \leq e^{|\bnu|} \frac{|\bnu|!}{\bnu!}$, see, e.g., \cite[Page 61]{CoDe} we get 
\beqq
|\partial^{\bnu}\lambda_1(\by)| 
\leq K_\lambda |\bnu|! \Big( \frac{e\bbeta}{\eta}\Big)^\bnu \,.
\eeqq
A similar argument applies for $\omega_1(\by)$. This estimate improves \eqref{eq:alexbound} and can be used to consider the QMC error in the case $(\beta_j)_{j\in \NN}\in \ell_1(\NN)$ which was excluded in \cite{Gil18}.  
\end{remark}
 
\section{Analysis of the QMC method}\label{sec:error}
In this section we apply the estimate of derivatives of $\lambda_1(\by)$ and $\omega_1(\by)$ with respect to the parameters $\by$ to analysize the convergence rate of QMC method for $\E_\by(\lambda_1)$ and $\E_\by(\mathcal{G}(\omega_1))$ where $\mathcal{G}\in V^*$.

We review some basic results about the QMC quadratures for approximating the $s$-dimension integrals, following \cite{Kuo16}.
For a measurable function $F: U^s\to \RR$ we seek to approximate the integral of the form
\beqq
I_s(F): =\int_{U^s}F(\bxi)\rd\bxi.
\eeqq
To approximate $I_s(F)$ we use the randomly shifted lattice rule which is given by the QMC quadrature 
\begin{equation} \label{brace}
Q_{s,N}^\varDelta(F)=\frac{1}{N}\sum_{i=1}^NF\bigg(\Big\{\frac{i\bz}{N}+\varDelta\Big\}-\frac{1}{2}\bigg),
\end{equation}
where $\bz\in \NN^s$ is the generating vector and $\varDelta$ is a random shift which is uniformly distributed over the cube $(0,1)^s$. The braces in \eqref{brace} indicate that we take the fractional parts of each component in a vector. We want to evaluate the root-mean-square error given by
\begin{equation*}
\sqrt{\mathbb{E}^\varDelta\big(|I_s(F)-Q_{s,N}^\varDelta(F)|^2\big)}\,,
\end{equation*} 
where $\mathbb{E}^\varDelta$ is the expectation with respect to the random shift $\varDelta$. 

It is well-known that good randomly shifted lattice rules can be constructed to achieve the optimal rate of convergence close to $\mathcal{O}(n^{-1})$ provided that integrand lies a certain weighted Sobolev space.
Denote $[s]=\{1,\ldots,s\}$ and let $\bgamma=(\gamma_\mfu)_{\mfu\subseteq [s]}$ be a sequence of positive weights. We define the weighted Sobolev space of mixed first order derivatives $\mathcal{W}_{\bgamma}(U^s)$ as the collection of all functions $F: U^s\to \RR$ such that
\begin{equation*}
\|F\|_{\Ww_\bgamma(U^s)}^2=\sum_{\mfu\subseteq [s]}\frac{1}{\gamma_{\mfu}} \int_{U^{|\mfu|}}\Bigg(\int_{U^{|\bar{\mfu}|}}\frac{\partial ^{|\mfu|}
	F}{\partial \bxi_{\mfu}}\big(\bxi\big) \rd \bxi_{\bar{\mfu}} \Bigg)^2\rd\bxi_{\mfu}<\infty \,.
\end{equation*}
Here  $\bar{\mfu}:=[s]\backslash \mfu$ and $\frac{\partial ^{|\mfu|}
	F}{\partial \bxi_{\mfu}}$ denotes the mixed first derivatives of $F$ with respect to the variable $\bxi_{\mfu}=(\xi_j)_{j\in \mfu}$. The weight sequence $(\gamma_{\mfu})_{\mfu\subseteq [s]}$ is associated with each subset of variables to moderate the relative importance between the different sets of variables. With an appropriate choice of weight we can get the error bound independent of the dimension $s$. Moreover, we need some structure of the weight for the Component-by-component (CBC) construction cost to be feasible. Different types of weights have been considered depending on the problem and the estimation of  $\frac{\partial ^{|\mfu|}
	F}{\partial \bxi_{\mfu}}$. In the case of product weights the cost of the fast CBC algorithm for constructing a randomly shifted lattice rule with $N$ points  is $\mathcal{O}(sN\log N)$ while $ \mathcal{O}(sN\log N+ s^2N)$ operations needed in the case of product and order dependent weights, see \cite[Section 5]{Kuo16}.

We have the following result on the error of the QMC quadrature \eqref{brace}, see, e.g., \cite[Theorem 5.1]{Kuo16}. 
\begin{proposition}\label{QMC} Let $s\in \NN$ and $(\gamma_j)_{j=1}^s$ be a positive sequence. We define the product weight by  $\bgamma=(\gamma_{\mfu})_{\mfu \subseteq [s]}$ where $\gamma_\mfu=\prod_{i\in \mfu }\gamma_j$. Then a randomly shifted lattice rule with $N$ points can be constructed in $\mathcal{O}(sN\log N)$ operations using the fast CBC algorithm such that for every $F\in \Ww_{\gamma}(U^s)$ and for every $\lambda\in (1/2,1]$ there holds the error bound
\begin{align*}
	\sqrt{\mathbb{E}^\varDelta\big(|I_s(F)-Q_{s,N}^\varDelta(F)|^2\big)}
\leq \Bigg( \sum_{\mfu \subseteq [s]}\gamma_{\mfu}^\lambda \bigg(\frac{2\zeta(2\lambda)}{(2\pi^2)^\lambda} \bigg)^{|\mfu|}\Bigg)^{\frac{1}{2\lambda}} \varphi(N)^{-\frac{1}{2\lambda}}	\|F\|_{\Ww_\bgamma(U^s)} ,
\end{align*}
where $\varphi(N)$ denotes Euler's totient function and $\zeta(x):=\sum_{k\in \NN}k^{-x}$ denotes the Riemann zeta function. 
\end{proposition}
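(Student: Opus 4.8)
This is a by now classical error bound for component-by-component (CBC) constructed randomly shifted lattice rules in the unanchored weighted Sobolev space of mixed smoothness one, and the plan is to reproduce the argument of Sloan, Kuo and Wo\'zniakowski that underlies \cite[Theorem 5.1]{Kuo16}. First I would recast the problem in the reproducing kernel Hilbert space (RKHS) framework: $\Ww_\bgamma(U^s)$ with the stated norm is an RKHS, so that for any QMC rule with nodes $\bt_1,\dots,\bt_N$ one has $|I_s(F)-Q_{s,N}(F)|\le e(\{\bt_i\})\,\|F\|_{\Ww_\bgamma(U^s)}$, where the worst-case error $e(\{\bt_i\})$ is expressed through the reproducing kernel. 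For a \emph{randomly shifted} rule I would average the squared worst-case error over the shift $\varDelta$; the shift-averaged kernel is the product $\prod_{j=1}^{s}\bigl(1+\gamma_j B_2(\{x_j-y_j\})\bigr)$ with $B_2$ the degree-two Bernoulli polynomial, so that $\E^\varDelta\bigl[e(Q_{s,N}^\varDelta)^2\bigr]=:e_{s,N}^{\mathrm{sh}}(\bz)^2$ depends on the generating vector $\bz$ alone, and it remains to bound $e_{s,N}^{\mathrm{sh}}(\bz)$.

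Second, I would expand the product over subsets $\mfu\subseteq[s]$, insert the Fourier series of $B_2(\{\cdot\})$ (whose $h$-th coefficient has modulus $(2\pi^2h^2)^{-1}$) and use the character-sum identity $\tfrac1N\sum_{k=0}^{N-1}e^{2\pi\mathrm i\ell k/N}=\mathbf 1[N\mid\ell]$ to arrive at the exponential-sum formula
\[
e_{s,N}^{\mathrm{sh}}(\bz)^2=\sum_{\emptyset\neq\mfu\subseteq[s]}\gamma_\mfu\sum_{\substack{\bh_\mfu\in(\ZZ\setminus\{0\})^{|\mfu|}\\ \sum_{j\in\mfu}h_jz_j\equiv 0\ (\mathrm{mod}\ N)}}\ \prod_{j\in\mfu}\frac{1}{2\pi^2h_j^2}\,.
\]

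Third --- this is the real work --- I would build $\bz$ greedily: for $d=1,\dots,s$, with $z_1,\dots,z_{d-1}$ already fixed, let $z_d$ be one of the $\varphi(N)$ residues coprime to $N$ that minimises $e_{d,N}^{\mathrm{sh}}(z_1,\dots,z_{d-1},z)^2$. Writing $\theta_d:=e_{d,N}^{\mathrm{sh}}(z_1,\dots,z_d)^2$ and using $\min_z a_z\le\bigl(\tfrac1{\varphi(N)}\sum_z a_z^\lambda\bigr)^{1/\lambda}$, then splitting off from the subset sum the $\mfu$'s that contain the new index $d$, applying $(x+y)^\lambda\le x^\lambda+y^\lambda$ for $\lambda\in(1/2,1]$, and finally averaging the congruence-constrained inner sums over the admissible $z_d$ (each frequency vector being compatible with at most one such residue, the frequencies divisible by $N$ absorbed via $\sum_{h\neq 0}|h|^{-2\lambda}=2\zeta(2\lambda)$), I would prove by induction on $d$ that
\[
\theta_d^{\lambda}\ \le\ \frac{1}{\varphi(N)}\sum_{\emptyset\neq\mfu\subseteq[d]}\gamma_\mfu^\lambda\biggl(\frac{2\zeta(2\lambda)}{(2\pi^2)^\lambda}\biggr)^{|\mfu|},\qquad d=1,\dots,s\,.
\]
Taking $d=s$, raising to the power $1/(2\lambda)$ and multiplying by $\|F\|_{\Ww_\bgamma(U^s)}$ yields the asserted estimate; and for product weights each greedy step amounts to a length-$N$ circular correlation in $z_d$, which the fast Fourier transform evaluates in $\mathcal O(N\log N)$ operations, so $\mathcal O(sN\log N)$ in total.

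The hard part is the third step: the averaging over coprime residues has to be organised so that it produces exactly the factor $2\zeta(2\lambda)/(2\pi^2)^\lambda$ per active coordinate together with a single factor $1/\varphi(N)$, which forces careful bookkeeping of the frequencies that are multiples of $N$ (for which the congruence is met by every residue) and of the interplay between the subset sums and the Jensen/subadditivity estimates, all while keeping the final constant independent of the dimension $s$.
\hfill
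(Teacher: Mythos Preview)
The paper does not actually prove this proposition: it is quoted verbatim as a known result, with the sentence ``We have the following result on the error of the QMC quadrature \eqref{brace}, see, e.g., \cite[Theorem 5.1]{Kuo16}'' and no further argument. Your outline is a faithful sketch of the standard CBC analysis (RKHS worst-case error, shift-averaged Bernoulli kernel, Fourier expansion to the dual-lattice sum, and the Jensen/averaging induction over coordinates) that underlies the cited theorem, so you are reproducing precisely the proof the paper defers to the literature; there is nothing to compare and no gap to flag.
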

\begin{remark}It is known that for any fixed $\delta\in (0,1)$,  we have $\frac{\varphi(N)}{N^\delta}\to \infty$ when $N\to \infty$. 
	If $N$ is a prime then we have $\varphi(N)=N-1$. We can verify that $\varphi(N)>\frac{N}{9}$ for $N\leq 10^{30}$. Hence, in practice one can replace $\varphi(N)$ by $N$ multiplying with an appropriate constant.
\end{remark}

For $\by=(y_j)_{j\in \NN}\in U^\infty$ we denote $\by_{s}=(y_1,\ldots,y_s,0,\ldots)$ and $\by_{\bar{s}} = (0,\ldots,0,y_{s+1}, y_{s+2},\ldots)$, $\lambda_{1,s}(\by):=\lambda_1(\by_s)$ and $\omega_{1,s}(\by):=\omega_1(\by_s)$.
Our  result in this section reads as follows.
\begin{theorem}\label{thm-main3}
Let $s\in \NN$ and $N\in \NN$ be prime, $\mathcal{G}\in V^*$. Let Assumption \ref{assumption} hold  with  a non-increasing sequence $(\rho_j^{-1})_{j\in \NN}$   and $(\rho_j^{-1})_{j\in \NN}\in \ell_p(\NN)$ for $p\in (0,1]$. Then a randomly shifted lattice rule with $N$ points can be constructed in $\mathcal{O}(sN\log N)$ operations using the fast CBC algorithm such that 
\begin{equation}\label{eq-error-value}	\sqrt{\mathbb{E}^\varDelta\big(|\E_{\by}[\lambda_1]-Q_{s,N}^\varDelta(\lambda_{1,s})|^2\big)}
\leq C_\lambda\Big( \min\Big\{ \rho_{s+1}^{-1},  s^{-2(\frac{1}{p}-1)}\Big\}  +N^{-\alpha} \Big)
\end{equation}
and
\begin{equation}\label{eq-error-function}
\sqrt{\mathbb{E}^\varDelta\big(|\E_{\by}[\mathcal{G}(\omega_{1})]-Q_{s,N}^\varDelta(\mathcal{G}(\omega_{1,s}))|^2\big)} 
\leq 
C_\omega 
\begin{cases}
 s^{-2(\frac{1}{p}-1)}    + N^{-\alpha} &\text{if} \ \ p<1
 	\\[1ex]
\big(\sum_{j= s+1}^\infty \rho_j^{-1}\big)^2 +  N^{-\frac{1}{2}} & \text{if}\ \  p=1
\end{cases}
\end{equation}
where 
$$
\alpha=
\begin{cases}
	1-\delta, \text{for arbitrary }\delta\in(0,\frac{1}{2}), & \text{if } p\in (0,\frac{2}{3}]
	\\
	\frac{1}{p}-\frac{1}{2}& \text{if } p\in (\frac{2}{3},1]
\end{cases}
$$
and the positive constants $C_\lambda$ and $C_\omega$ are independent of $s$ and $N$. 
\end{theorem}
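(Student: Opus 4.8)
The plan is to split the error, via the triangle inequality, into a \emph{dimension truncation} part and a \emph{lattice rule quadrature} part. Since $\by_s$ has zero entries beyond index $s$, one has $\E_\by[\lambda_{1,s}]=\int_{U^\infty}\lambda_1(\by_s)\,\rd\by=I_s(\lambda_{1,s})$, and the truncation error $\E_\by[\lambda_1]-\E_\by[\lambda_{1,s}]$ is a deterministic number, so
\[
\sqrt{\E^\varDelta\big(|\E_\by[\lambda_1]-Q_{s,N}^\varDelta(\lambda_{1,s})|^2\big)}\le\big|\E_\by[\lambda_1-\lambda_{1,s}]\big|+\sqrt{\E^\varDelta\big(|I_s(\lambda_{1,s})-Q_{s,N}^\varDelta(\lambda_{1,s})|^2\big)},
\]
and likewise for $\mathcal G(\omega_1)$. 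I would then bound the two terms in turn, using \RefThm{Thm:derivatives} (and \RefProp{prop:properties}) for the first and \RefProp{QMC} for the second.

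\emph{Truncation.} For $\lambda_1$ I would derive two estimates. The crude one uses that $(\rho_j^{-1})_{j\in\NN}$ is non-increasing, so $\|a(\by)-a(\by_s)\|_{L_\infty(D)}=\|\sum_{j>s}y_ja_j\|_{L_\infty(D)}\le\tfrac12\rho_{s+1}^{-1}\Lambda_1$, whence the Lipschitz bound \eqref{eq-continuos} from \RefProp{prop:properties} gives $|\E_\by[\lambda_1-\lambda_{1,s}]|\le C\rho_{s+1}^{-1}$. The sharp one exploits cancellation of the first-order Taylor term: fixing $\by_{[s]}$ and writing $g(\bw)=\lambda_1(\by_s+\bw)$ for $\bw$ supported on $\{s+1,s+2,\dots\}$, an integral-remainder identity gives $g(\bw)-g(0)-\sum_{j>s}w_j\partial_jg(0)=\int_0^1\!\!\int_0^1 t\sum_{j,k>s}w_jw_k\,\partial_j\partial_kg(st\bw)\,\rd s\,\rd t$, where all series converge absolutely since $\sum_{j>s}\rho_j^{-1}<\infty$ (recall $(\rho_j^{-1})\in\ell_p\subset\ell_1$) and $|\partial_j\partial_k\lambda_1|\le 2K_\lambda\eta^{-2}\rho_j^{-1}\rho_k^{-1}$ by \RefThm{Thm:derivatives}. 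Integrating in $\by$: the $y_j$ with $j>s$ have mean zero and are independent of $\by_{[s]}$, so $\E_\by[\sum_{j>s}y_j\,\partial_j\lambda_1(\by_s)]=0$, leaving $|\E_\by[\lambda_1-\lambda_{1,s}]|\le\tfrac{K_\lambda}{4\eta^2}\big(\sum_{j>s}\rho_j^{-1}\big)^2$. From $\rho_j^{-1}\le C\,j^{-1/p}$ (monotonicity plus $\ell_p$-summability) one gets $\sum_{j>s}\rho_j^{-1}\le Cs^{-(1/p-1)}$ when $p<1$, so the sharp bound is $Cs^{-2(1/p-1)}$; the minimum of the two bounds is the truncation term of \eqref{eq-error-value}. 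The same Taylor--centering argument for $\mathcal G(\omega_1)$, with $|\partial_j\partial_k\mathcal G(\omega_1)|\le\|\mathcal G\|_{V^*}\|\partial_j\partial_k\omega_1\|_V\le 2\|\mathcal G\|_{V^*}K_\omega\eta^{-2}\rho_j^{-1}\rho_k^{-1}$, yields $|\E_\by[\mathcal G(\omega_1-\omega_{1,s})]|\le C(\sum_{j>s}\rho_j^{-1})^2$, bounded by $Cs^{-2(1/p-1)}$ when $p<1$ and kept as $(\sum_{j>s}\rho_j^{-1})^2$ when $p=1$, as in \eqref{eq-error-function}.

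\emph{Quadrature.} I would apply \RefProp{QMC} with product weights $\gamma_\mfu=\prod_{j\in\mfu}\gamma_j$. For $\mfu\subseteq[s]$ let $\bee_\mfu\in\FF$ be the multi-index equal to $1$ on $\mfu$ and $0$ otherwise; \RefThm{Thm:derivatives} gives $|\partial^{\bee_\mfu}\lambda_{1,s}|\le K_\lambda\prod_{j\in\mfu}(\eta\rho_j)^{-1}$ (here $\bee_\mfu!=1$), hence, bounding each inner integral over the unit-volume cube trivially, $\|\lambda_{1,s}\|_{\Ww_\bgamma(U^s)}^2\le K_\lambda^2\prod_{j=1}^s\big(1+(\gamma_j\eta^2\rho_j^2)^{-1}\big)$. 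Plugging this, together with the product-weight identity $\sum_{\mfu\subseteq[s]}\gamma_\mfu^\lambda c_\lambda^{|\mfu|}=\prod_{j=1}^s(1+\gamma_j^\lambda c_\lambda)$ where $c_\lambda:=2\zeta(2\lambda)/(2\pi^2)^\lambda$, into \RefProp{QMC}, and choosing $\gamma_j=(\eta^2\rho_j^2)^{-1/(1+\lambda)}c_\lambda^{-1/(1+\lambda)}$, both products become $\prod_{j=1}^s\big(1+c_\lambda^{1/(1+\lambda)}(\eta^2\rho_j^2)^{-\lambda/(1+\lambda)}\big)$, which stays bounded as $s\to\infty$ exactly when $\sum_j\rho_j^{-2\lambda/(1+\lambda)}<\infty$, i.e.\ when $\lambda\ge p/(2-p)$. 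So for prime $N$ (where $\varphi(N)=N-1$) the quadrature RMS error is at most $C_\lambda N^{-1/(2\lambda)}$ with $C_\lambda$ depending only on $K_\lambda$ and the convergent infinite product, hence independent of $s$ and $N$. Taking $\lambda=p/(2-p)\in(\tfrac12,1]$ for $p\in(\tfrac23,1]$ gives $\tfrac1{2\lambda}=\tfrac1p-\tfrac12=\alpha$; taking $\lambda=\tfrac12+\delta'$ with $\delta'>0$ as small as desired for $p\in(0,\tfrac23]$ gives $\tfrac1{2\lambda}=\tfrac1{1+2\delta'}>1-\delta$ for any prescribed $\delta\in(0,\tfrac12)$ — precisely the $\alpha$ of the theorem. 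The estimate for $\mathcal G(\omega_{1,s})$ is identical with $K_\lambda$ replaced by $\|\mathcal G\|_{V^*}K_\omega$. Adding the truncation and quadrature bounds gives \eqref{eq-error-value} and \eqref{eq-error-function}.

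The main obstacle is the sharp (squared-rate) truncation estimate: it rests on pairing the analyticity-based derivative bounds of \RefThm{Thm:derivatives} with the exact vanishing of the first-order Taylor contribution under the centered product measure on $U^\infty$ — which has to be justified carefully in infinitely many variables — and then on converting the tail $\sum_{j>s}\rho_j^{-1}$ into an algebraic rate in $s$ via $\ell_p$-summability and monotonicity of $(\rho_j^{-1})$. The weight-optimisation step in the quadrature part, though it is the technical heart of the dimension independence, is by now routine in the QMC literature cited in the introduction.
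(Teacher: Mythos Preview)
Your proposal is correct and follows essentially the same route as the paper: the triangle-inequality split into truncation plus quadrature, the two truncation bounds (Lipschitz-based and Taylor-with-centered-first-order), and the product-weight optimisation in \RefProp{QMC} with the same choice $\gamma_j=\big((\eta\rho_j)^{-2}/c_\lambda\big)^{1/(1+\lambda)}$ and the same case split on $\lambda$. The only cosmetic differences are that the paper packages the truncation bounds as separate lemmas (\RefLem{lem:dim} and \RefLem{lem:dim-lambda}), invokes Stechkin's estimate \eqref{Stechkin} rather than the pointwise bound $\rho_j^{-1}\le Cj^{-1/p}$, and cites \cite[Theorem 4.1]{Gil18} for the infinite-dimensional Taylor identity rather than writing it out.
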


Before going to proof, we need a truncation estimation. 
Using the analyticity of $\lambda_1(\by)$ and $\omega_1(\by)$ we can prove the following. 
\begin{lemma}\label{lem:dim}
	Let $s\in \NN$ and Assumption \ref{assumption} hold. 	Let $\Gg\in V^*$. Assume that $(\rho_j^{-1})_{j\in \NN}\in \ell_p(\NN)$ with $p\in (0,1]$ and  $(\rho_j^{-1})_{j\in \NN}$ is non-increasing. If $p\in (0,1)$ then  we have
	$$ 
	\big|\E_\by\big[\lambda_1-\lambda_{1,s}\big]\big| 	\leq
	C_0^2\frac{   K_\lambda}{4\eta^2} s^{-2(\frac{1}{p}-1)} 
	$$
	and
\begin{equation}\label{eq-proved}	\big|\mathbb{E}_\by\big[\Gg(\omega_1)-\Gg(\omega_{1,s})\big]\big| 
	\leq
	C_0^2\frac{  \|\Gg\|_{V^*}K_\omega }{4\eta^2} s^{-2(\frac{1}{p}-1)},
\end{equation}
	where $C_0=\min\big(\frac{p}{1-p},1 \big)\big\|(\rho_j^{-1})_{j\in \NN}\big\|_{\ell_p(\NN)}$ and $K_\lambda$, $K_\omega
	$, $\eta $ are given in Theorem \ref{Thm:derivatives}. When $p=1$, it holds
	$$ 
\big|\E_\by\big[\lambda_1-\lambda_{1,s}\big]\big| 	\leq
 \frac{   K_\lambda}{4\eta^2} \Bigg(\sum_{j= s+1}^\infty \rho_j^{-1}\Bigg)^2  
$$
	and
	$$ \big|	\mathbb{E}_\by\big[\Gg(\omega_1)-\Gg(\omega_{1,s})\big]  \big|
	\leq  \frac{ \|\Gg\|_{V^*}K_\omega}{4\eta ^2} \Bigg(\sum_{j= s+1}^\infty \rho_j^{-1}\Bigg)^2 \,.
	$$
\end{lemma}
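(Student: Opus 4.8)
The plan is to reduce the truncation error to a statement about the tails of the Taylor expansion of $\lambda_1$ (resp. $\mathcal{G}(\omega_1)$) in the dropped variables, and then to control that tail by the derivative bounds of \RefThm{Thm:derivatives}. For fixed $\by\in U^\infty$ write $\by_{\bar s}=(0,\dots,0,y_{s+1},y_{s+2},\dots)$ so that $\by=\by_s+\by_{\bar s}$. The first step is a first-order Taylor argument in the tail variables: since $\lambda_1$ is (separately) analytic on $\Ee(\btau)$ by \RefThm{thm:analytic}, the map $t\mapsto \lambda_1(\by_s+t\by_{\bar s})$ is smooth on $[0,1]$, and the exact integral identity
\[
\lambda_1(\by)-\lambda_{1,s}(\by)=\sum_{j> s} y_j\,\partial_{y_j}\lambda_1(\by_s)+\frac12\sum_{j,k> s} y_j y_k \int_0^1 (1-t)\,\partial_{y_j}\partial_{y_k}\lambda_1(\by_s+t\by_{\bar s})\,\rd t
\]
holds pointwise. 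The key observation is that the first-order term has \emph{zero} expectation: $\E_{\by}[y_j\,\partial_{y_j}\lambda_1(\by_s)]=\E_{\by_{\bar s}}[y_j]\cdot\E[\partial_{y_j}\lambda_1(\by_s)]=0$ because each $y_j$, $j>s$, is centered on $U$ and $\partial_{y_j}\lambda_1(\by_s)$ does not depend on $y_j$. Hence only the quadratic remainder survives after taking $\E_\by$.

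The second step is to bound the quadratic remainder using $|\partial^\bnu\lambda_1(\by)|\le K_\lambda\,\bnu!/(\eta\brho)^\bnu$ from \RefThm{Thm:derivatives}, applied with $\bnu=\bee_j+\bee_k$: this gives $|\partial_{y_j}\partial_{y_k}\lambda_1|\le K_\lambda/(\eta^2\rho_j\rho_k)$ for $j\ne k$ and $|\partial_{y_j}^2\lambda_1|\le 2K_\lambda/(\eta^2\rho_j^2)$; note these bounds are uniform over $\Ee(\btau)$, hence over the segment $\by_s+t\by_{\bar s}$, and that they are stated with the \emph{original} sequence $\brho$ (taking $\varepsilon\to 0$ in the theorem). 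Using $|y_j|\le\frac12$ and summing,
\[
\big|\E_\by[\lambda_1-\lambda_{1,s}]\big|\le \frac{K_\lambda}{4\eta^2}\Bigg(\sum_{j> s}\rho_j^{-1}\Bigg)^2,
\]
which is exactly the $p=1$ bound. For $p\in(0,1)$ one further estimates the tail $\sum_{j>s}\rho_j^{-1}$: since $(\rho_j^{-1})$ is non-increasing and $p$-summable, Stechkin's inequality gives $\sum_{j>s}\rho_j^{-1}\le \min\!\big(\tfrac{p}{1-p},1\big)\,\|(\rho_j^{-1})\|_{\ell_p}\,s^{-(1/p-1)}=C_0\,s^{-(1/p-1)}$, and squaring produces the claimed $C_0^2\,\tfrac{K_\lambda}{4\eta^2}\,s^{-2(1/p-1)}$.

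The third step is to repeat the argument verbatim for $\mathcal{G}(\omega_1)$: $\mathcal{G}$ is a fixed bounded linear functional on $V$, so $\partial^\bnu\big(\mathcal{G}(\omega_1)\big)=\mathcal{G}(\partial^\bnu\omega_1)$ and hence $|\partial^\bnu\mathcal{G}(\omega_1)|\le\|\mathcal{G}\|_{V^*}\,\|\partial^\bnu\omega_1\|_V\le\|\mathcal{G}\|_{V^*}K_\omega\,\bnu!/(\eta\brho)^\bnu$; the same centering of the first-order term and the same quadratic estimate then yield the two displays for $\mathcal{G}(\omega_1)$. I expect the only genuine subtlety — the "hard part" — to be the justification of the exact Taylor-with-integral-remainder identity together with the interchange of $\E_\by$ and the infinite sums/integral: one must check that the double series $\sum_{j,k>s}|y_j y_k|\,\|\partial_{y_j}\partial_{y_k}\omega_1\|_V$ converges uniformly (it does, being dominated by $\big(\sum_{j>s}\rho_j^{-1}\big)^2<\infty$ via the derivative bound), so that Fubini/Tonelli and termwise differentiation along the finite-dimensional sections are legitimate; everything else is the routine bookkeeping indicated above.
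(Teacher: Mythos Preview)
Your proposal is correct and follows essentially the same route as the paper: a second-order Taylor expansion along $t\mapsto \by_s+t\by_{\bar s}$, vanishing of the linear term under $\E_\by$ by the centering of $y_j$ for $j>s$, the derivative bounds of \RefThm{Thm:derivatives} applied to the quadratic remainder, and Stechkin's inequality for the tail when $p<1$. The paper imports the second-order identity from \cite[Theorem~4.1]{Gil18} rather than spelling out your centering argument, but otherwise the two proofs coincide (note that your displayed Taylor remainder carries a spurious $\tfrac12$---the correct formula is $f(1)=f(0)+f'(0)+\int_0^1(1-t)f''(t)\,\rd t$---but this slip is harmless for the stated upper bound).
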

\begin{proof}
We prove \eqref{eq-proved}. The other bounds are carried out  similarly. First, we recall Stechkin's estimate
	\be \label{Stechkin}
	\sum_{j= s+1}^\infty \rho_j^{-1} \leq \min\Big(\frac{p}{1-p},1 \Big)\big\|(\rho_j^{-1})_{j\in \NN}\big\|_{\ell_p(\NN)}s^{-(\frac{1}{p}-1)}\,,
	\ee 
	see \cite[Theorem 5.1]{Kuo12}. Since $\omega_1(\by)$ is analytic, see Theorem \ref{thm:analytic}, by Taylor's Theorem with integral form of the remainder we obtain
	\beqq
	\omega_1(\by)=\omega_1(\by_s)+\sum_{j=s+1}^\infty y_j\int_0^1 \partial^{\bee_j}\omega_1\big(\by_s+t\by_{\bar{s}}\big)\rd t\,,
	\eeqq
	where $\bee_j=(\delta_{j,\ell})_{\ell\in \NN}$ and $\delta_{j,\ell}$ denotes the Kronecker delta.	Employing Theorem \ref{Thm:derivatives} with the fact that $|y_j|\leq \frac{1}{2}$ yields
	\begin{align*}
		\big\|\omega_1(\by)-\omega_{1,s}(\by)\big\|_V & \leq \frac{1}{2}\sum_{j= s+1}^\infty\int_0^1\big\| \partial^{\bee_j}\omega_1\big(\by_s+t\by_{\bar{s}}\big)\big\|_V\rd t
		\\
		&	\leq \frac{1}{2} \sum_{j= s+1}^\infty \frac{K_{\omega}}{\eta } \rho_j^{-1}  \leq \frac{K_\omega}{2\eta } C_0 s^{-(\frac{1}{p}-1)}\,,
	\end{align*}
	where in the last inequality we used \eqref{Stechkin}. It has been proved in \cite[Theorem 4.1]{Gil18} that
	\begin{equation*}
		\mathbb{E}_\by\big[\Gg(\omega_1)-\Gg(\omega_{1,s})\big] = \sum_{\ell,j= s+1}^\infty \mathbb{E}_\by\Bigg[\Gg \bigg(\frac{2y_\ell y_j}{(\bee_\ell +\bee_j)!}\int_0^1(1-t)\partial^{\bee_\ell +\bee_j}\omega_1\big( \by_s+t\by_{\bar{s}}\big)\rd t \bigg) \Bigg]\,.
	\end{equation*}
	By the linearity of $\Gg$, from Theorem \ref{Thm:derivatives}  we obtain 
	\begin{equation*}
		\begin{split}
			\Bigg|\Gg \bigg(\frac{2y_\ell y_j}{(\bee_\ell +\bee_j)!}\int_0^1(1-t)\partial^{\bee_\ell +\bee_j}\omega_1\big( \by_s+t\by_{\bar{s}}\big)\rd t \bigg) \Bigg|
			& \leq 
			\frac{1}{4}\|\Gg\|_{V^*} \frac{\big\| \partial^{\bee_\ell +\bee_j}\omega_1\big( \by_s+t\by_{\bar{s}}\big)\big\|_V}{(\bee_\ell +\bee_j)!}
			\\
			& \leq \frac{1}{4} \|\Gg\|_{V^*} \frac{K_\omega}{\eta ^2}\rho_\ell ^{-1}\rho_j^{-1}\,.
		\end{split} 
	\end{equation*}
	This leads to
	\begin{equation*} 
		\begin{split} 
			\big|	\mathbb{E}_\by\big[\Gg(\omega_1)-\Gg(\omega_{1,s})\big]   \big|
			& \leq \frac{ \|\Gg\|_{V^*}K_\omega}{4\eta ^2} \sum_{\ell,j= s+1}^\infty \rho_\ell ^{-1}\rho_j^{-1} 
			\\
			& = \frac{ \|\Gg\|_{V^*}K_\omega}{4\eta ^2} \Bigg(\sum_{j= s+1}^\infty \rho_j^{-1}\Bigg)^2 \leq \frac{  \|\Gg\|_{V^*}K_\omega }{4\eta ^2} C_0^2s^{-2(\frac{1}{p}-1)}\,.
		\end{split}
	\end{equation*}
	The proof is finished.
	\hfill
\end{proof}

We also have the following.
\begin{lemma}\label{lem:dim-lambda}
Let $s\in \NN$.	Under Assumption \ref{assumption}, we have
	\begin{align*}	
		\big|\E_\by\big[\lambda_1 -\lambda_{1,s} \big]\big|   
		\leq C_1 \frac{\Lambda_1}{2}
		\sup_{j\geq s+1}\rho_j^{-1},
	\end{align*}
	where   $C_1$ is  given in \eqref{eq-continuos}. 
\end{lemma}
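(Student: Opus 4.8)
The plan is to bound the dimension truncation error $|\E_\by[\lambda_1 - \lambda_{1,s}]|$ by combining the Lipschitz continuity of $\lambda_1$ in $a$ established in \RefProp{prop:properties} (specifically the estimate \eqref{eq-continuos}) with the elementary truncation bound for the affine coefficient. First I would observe that $\lambda_{1,s}(\by) = \lambda_1(\by_s)$ corresponds to the coefficient $a(\by_s)(\bx) = a_0(\bx) + \sum_{j=1}^s y_j a_j(\bx)$, so that
\begin{equation*}
a(\by)(\bx) - a(\by_s)(\bx) = \sum_{j=s+1}^\infty y_j a_j(\bx).
\end{equation*}
Applying \eqref{eq-continuos} with $\tilde\by = \by_s$ gives $|\lambda_1(\by) - \lambda_{1,s}(\by)| \leq C_1 \|\sum_{j\geq s+1} y_j a_j\|_{L_\infty(D)}$ pointwise in $\by \in U^\infty$.

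Next I would estimate the $L_\infty(D)$-norm on the right-hand side using $|y_j| \leq \tfrac12$ and the defining property $\|\sum_{j\in\NN}\rho_j|a_j|\|_{L_\infty(D)} = \Lambda_1 < \infty$ from Assumption \ref{assumption}: writing each $|a_j| = \rho_j^{-1}\cdot \rho_j|a_j|$ and pulling out the supremum of $\rho_j^{-1}$ over the truncated tail,
\begin{equation*}
\Bigg\|\sum_{j=s+1}^\infty y_j a_j\Bigg\|_{L_\infty(D)}
\leq \frac12 \Bigg\|\sum_{j=s+1}^\infty \rho_j^{-1}\,\rho_j |a_j|\Bigg\|_{L_\infty(D)}
\leq \frac12 \Big(\sup_{j\geq s+1}\rho_j^{-1}\Big)\Bigg\|\sum_{j=s+1}^\infty \rho_j|a_j|\Bigg\|_{L_\infty(D)}
\leq \frac{\Lambda_1}{2}\sup_{j\geq s+1}\rho_j^{-1}.
\end{equation*}
Combining the two displays yields $|\lambda_1(\by) - \lambda_{1,s}(\by)| \leq C_1\tfrac{\Lambda_1}{2}\sup_{j\geq s+1}\rho_j^{-1}$ uniformly in $\by$, and then taking expectation over $\by$ (which only integrates a constant bound over the probability measure $\rd\by$) gives the claimed estimate directly.

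There is essentially no serious obstacle here; the only point requiring a little care is the manipulation of the $L_\infty$-norm of the tail sum, where one must justify that $\|\sum_{j\geq s+1}\rho_j|a_j|\|_{L_\infty(D)} \leq \|\sum_{j\in\NN}\rho_j|a_j|\|_{L_\infty(D)} = \Lambda_1$ (monotonicity of the sup-norm under adding nonnegative terms) and that factoring out $\sup_{j\geq s+1}\rho_j^{-1}$ is legitimate pointwise before taking the essential supremum. One should also note that the sequence $(\rho_j^{-1})$ need not be assumed monotone for this particular lemma — only the boundedness and $\lim_{j\to\infty}\rho_j^{-1}=0$ from Assumption \ref{assumption} are used — so the bound $\sup_{j\geq s+1}\rho_j^{-1}$ is finite and tends to $0$ as $s\to\infty$. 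The constant $C_1$ is exactly the one appearing in \eqref{eq-continuos}, which itself depends only on $\alpha_{\min},\alpha_{\max},\Lambda_0,\chi_1$ and the bounds on $\lambda_1$ from \RefLem{lem:boundLambda}, hence is independent of $s$.
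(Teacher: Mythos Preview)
Your proof is correct and follows exactly the same route as the paper's: apply the Lipschitz estimate \eqref{eq-continuos} with $\tilde{\by}=\by_s$, bound $\|\sum_{j\geq s+1} y_j a_j\|_{L_\infty(D)}$ by pulling out $\sup_{j\geq s+1}\rho_j^{-1}$ and using $\Lambda_1$, then take expectation. The additional remarks you make about monotonicity of the sup-norm and independence of $C_1$ from $s$ are accurate elaborations, but the core argument is identical.
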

\begin{proof} 
From \eqref{eq-continuos} we have
	\begin{align*}
		|\lambda_1(\by)-\lambda_{1,s}(\by)| 
		&
		\leq C_1
		\big\|a(\by)-a(\by_s)\big\|_{L_\infty(D)}  =C_1 \Bigg\| \sum_{j\geq s+1}y_j a_j\Bigg\|_{L_\infty(D)}
		\\
		&
		\leq 
		\frac{C_1}{2}   \sup_{j\geq s+1}\rho_j^{-1}   \Bigg\|\sum_{j\geq s+1}|\rho_ja_j|\Bigg\|_{L_\infty(D)} 
		\leq C_1 \frac{\Lambda_1}{2} \sup_{j\geq s+1}\rho_j^{-1}\,.
	\end{align*} 
This leads to the desired result.
	\hfill
\end{proof}
\begin{remark}
	By using estimates in Remark \ref{rem:global} we obtain a similar result as \cite[Theorem 4.1]{Gil18} for $p\in (0,1]$. Note that the constant $C_{\text{trunc}}$ in \cite[Theorem 4.1]{Gil18} depends on the other constant $C_\varepsilon$ and $C_{\text{trunc}}$ blows up when $\varepsilon$ goes to zero.
\end{remark}
We are now in the position to prove Theorem \ref{thm-main3}.

\begin{proof} We prove the error bound \eqref{eq-error-value}. The error bound \eqref{eq-error-function} is carried out similarly. Using triangle inequality we get
\begin{equation*}\label{eq-split}
\begin{aligned}
&\sqrt{\mathbb{E}^\varDelta
	\big(|\E_{\by}[\lambda_1]-Q_{s,N}^\varDelta(\lambda_{1,s})|^2\big)}
\\
&  \leq C\bigg(\sqrt{\mathbb{E}^\varDelta\big(|\E_{\by}[\lambda_1]-\E_{\by}[\lambda_{1,s}]|^2\big)} + \sqrt{ \mathbb{E}^\varDelta\big(|\E_{\by}[\lambda_{1,s}]-Q_{s,N}^\varDelta(\lambda_{1,s})|^2\big)}\,\bigg).
\end{aligned}
\end{equation*}
Since the first term on the right-hand side is independent of the random shift, by Lemmas \ref{lem:dim-lambda} and \ref{lem:dim} we get
$$
\sqrt{\mathbb{E}^\varDelta\big(|\E_{\by}[\lambda_1]-\E_{\by}[\lambda_{1,s}]|^2\big)} = \sqrt{ |\E_{\by}[\lambda_1]-\E_{\by}[\lambda_{1,s}]|^2}\leq  \min\Big\{ \rho_{s+1}^{-1},  s^{-2(\frac{1}{p}-1)}\Big\}.
$$
 For $\mfu \subseteq [s]$ we put $\bnu=(\nu_j)_{j\in \NN}\in \FF$ with $\nu_j\leq 1$ and $\supp(\bnu)=\mfu$.
From Theorem \ref{Thm:derivatives} we get
$$
\bigg|\frac{\partial ^{|\mfu|}
	\lambda_{1,s}}{\partial \bxi_{\mfu}}\big(\bxi\big)\bigg| \leq K_{\lambda}
\frac{\bnu! } {(\eta \brho)^\bnu}= K_\lambda \prod_{j\in \mfu}\frac{1}{\eta\rho_j}, \quad \bxi=(\xi_1,\ldots,\xi_s),
$$
which implies
\begin{align*}
	\|\lambda_{1,s}\|_{\Ww_\bgamma(U^s)}^2\leq  \sum_{\mfu\subseteq [s]}\frac{1}{\gamma_{\mfu}} \int_{U^{|\mfu|}}\Bigg(\int_{U^{|\bar{\mfu}|}}	K_\lambda \prod_{j\in \mfu}\frac{1}{\eta\rho_j}\rd \bxi_{\bar{\mfu}} \Bigg)^2\rd\bxi_{\mfu}
	\leq \sum_{\mfu\subseteq [s]}\frac{1}{\gamma_{\mfu}}  \Bigg( K_{\lambda}
\prod_{j\in \mfu}\frac{1}{\eta\rho_j}  \Bigg)^2 .
\end{align*}
By Proposition \ref{QMC}  we find
\begin{align*}
\sqrt{\mathbb{E}^\varDelta\big(|\E_{\by}[\lambda_{1,s}]-Q_{s,N}^\varDelta(\lambda_{1,s})|^2\big)}
	\leq 
	K_{\lambda}\Bigg( \sum_{\mfu \subseteq [s] }\gamma_{\mfu}^\lambda \bigg(\frac{2\zeta(2\lambda)}{(2\pi^2)^\lambda} \bigg)^{|\mfu|}\Bigg)^{\frac{1}{2\lambda}} 	 \Bigg(\sum_{\mfu\subseteq [s]}\frac{1}{\gamma_{\mfu}}  \prod_{j\in \mfu}\frac{1}{(\eta\rho_j)^2} \Bigg)^{\frac{1}{2}} N^{-\frac{1}{2\lambda}},
\end{align*}
Choosing 
$$\gamma_j=\bigg(\frac{1}{(\eta\rho_j)^2}:\frac{2\zeta(2\lambda)}{(2\pi^2)^\lambda}\bigg)^{\frac{1}{1+\lambda}}, \ j\in \mfu$$
we get
\begin{align*}
\sqrt{\mathbb{E}^\varDelta\big(|\E_{\by}[\lambda_{1,s}]-Q_{s,N}^\varDelta(\lambda_{1,s})|^2\big)}
&	\leq 
	C_{\lambda}\Bigg( \sum_{\mfu \subseteq [s]}\prod_{j\in \mfu} \big(\eta \rho_j\big)^{\frac{-2\lambda}{1+\lambda}}\bigg(\frac{2\zeta(2\lambda)}{(2\pi^2)^\lambda}\bigg)^{\frac{1}{1+\lambda}}\Bigg)^{\frac{1+\lambda}{2\lambda}} N^{-\frac{1}{2\lambda}}
	\\
	&\leq C_\lambda \exp\Bigg(\frac{1+\lambda}{2\lambda}\bigg(\frac{2\zeta(2\lambda)}{(2\pi^2)^\lambda}\bigg)^{\frac{1}{1+\lambda}}\sum_{j\in \NN} \big(\eta \rho_j\big)^{\frac{-2\lambda}{1+\lambda}}\Bigg)N^{-\frac{1}{2\lambda}},
\end{align*}
where we have applied \cite[Lemma 6.3]{Kuo12} in the last step in the second inequality. We consider two cases. If $p\in (0,\frac{2}{3}]$, for $\delta\in (0,\frac{1}{2})$ we choose $\lambda=\frac{1}{2(1-\delta)}$. By this choice it is easily seen that $\frac{2\lambda}{1+\lambda} \geq \frac{2}{3} $. Therefore $\sum_{j\in \NN} \big(\eta \rho_j\big)^{\frac{-2\lambda}{1+\lambda}}<\infty$ since $(\rho_j^{-1})_{j\in \NN}\in \ell_p(\NN)$. If $p\in (\frac{2}{3},1]$ we choose $\lambda$ such that $\frac{2\lambda}{1+\lambda}=p$ which implies $\frac{1}{2\lambda}=\frac{1}{p}-\frac{1}{2}$.  The proof is completed.
\hfill
\end{proof}

\providecommand{\bysame}{\leavevmode\hbox to3em{\hrulefill}\thinspace}
\providecommand{\MR}{\relax\ifhmode\unskip\space\fi MR }
\providecommand{\MRhref}[2]{%
	\href{http://www.ams.org/mathscinet-getitem?mr=#1}{#2}
}
\providecommand{\href}[2]{#2}

\end{document}